\newtheorem{theorem}{Theorem}
\newtheorem{lemma}[theorem]{Lemma}
\newtheorem{proposition}{Proposition}
\newtheorem{definition}{Definition}
\newtheorem{example}{Example}
\newtheorem{remark}{Remark}
\newtheorem{ques}{Question}
\numberwithin{equation}{section}
\renewcommand{\binom}[2]{\left(\genfrac{}{}{0pt}{}{#1}{#2}\right)}
\begin{document}
\title[Solutions to $ {\rm SU}(n+1) $ Toda system via toric curves]{Solutions to ${\rm SU}(n+1)$ Toda system with cone singularities via toric curves on compact Riemann surfaces}
	
	\author{Jingyu Mu}
	\address{School of Mathematical  Sciences, University of Science and Technology of China, Hefei 230026 China
	}
	\email{jingyu@mail.ustc.edu.cn}
	\thanks{
		Y.S. is supported in part by the National Natural Science Foundation of China
		(Grant No. 11931009) and Anhui Initiative in Quantum Information Technologies (Grant No. AHY150200).
		B.X. is supported in part by the Project of Stable Support for Youth Team in Basic Research Field, CAS (Grant No. YSBR-001) and the National Natural Science Foundation of China (Grant Nos. 12271495, 11971450 and 12071449).  
	}
	\thanks{$^\dagger$B.X. is the corresponding author.}
	
	\author{Yiqian Shi}
	\address{School of Mathematical Sciences and CAS Wu Wen-Tsun Key Laboratory of Mathematics, University of Science and Technology of China, Hefei 230026 China}
	\email{yqshi@ustc.edu.cn}
	\thanks{}
		
	\author{Bin Xu$^\dagger$}
	\address{School of Mathematical Sciences and CAS Wu Wen-Tsun Key Laboratory of Mathematics, University of Science and Technology of China, Hefei 230026 China}
	\email{bxu@ustc.edu.cn}
	\subjclass[2020]{Primary 37K10; Secondary 35J47}
	
	\date{}
	
	\dedicatory{}
	
	\keywords{${\rm SU}(n+1)$ Toda system,  regular singularity, unitary curve, Toric solution, character ensemble}



 \begin{abstract}
On a compact Riemann surface \(X\) with finite punctures \(P_1, \ldots, P_k\), we define \textit{toric curves} as multi-valued, totally unramified holomorphic maps to \(\mathbb{P}^n\) with monodromy in a maximal torus of \({\rm PSU}(n+1)\). \textit{Toric solutions} for the \({\rm SU}(n+1)\) system on $X\setminus\{P_1,\ldots, P_k\}$ are recognized by their associated {\it toric} curves in \(\mathbb{P}^n\). We introduce a \textit{character n-ensemble} as an \(n\)-tuple of meromorphic one-forms with simple poles and purely imaginary periods, generating toric curves on \(X\) minus finitely many points. We establish on $X$ a correspondence between character $n$-ensembles and toric solutions to the \({\rm SU}(n+1)\) system  with finitely many cone singularities. Our approach not only broadens seminal solutions for up to two cone singularities on the Riemann sphere, as classified by Jost-Wang (\textit{Int. Math. Res. Not.}, (6):277-290, 2002) and Lin-Wei-Ye (\textit{Invent. Math.}, 190(1):169-207, 2012), but also advances beyond the limits of Lin-Yang-Zhong's existence theorems (\textit{J. Differential Geom.}, 114(2):337-391, 2020) by introducing a new solution class.
\end{abstract}

\maketitle

\section {Introduction}

Gervais-Matsuo \cite[Section 2.2.]{GM:1993} were the first to demonstrate that totally unramified holomorphic curves \cite[p.\,270, Proposition]{GH:1994} in \({\mathbb{P}}^n\) yield local solutions to \({\rm SU}(n+1)\) Toda system, interpreting these systems as the infinitesimal Pl\"ucker formula \cite[p.\,269]{GH:1994} for these curves. Adam Doliwa \cite{Do:1997} expanded upon this foundation, extending their findings to Toda systems associated with non-exceptional simple Lie algebras.  Over the ensuing years, Luca Battaglia, Chang-shou Lin, and their collaborators \cite{Batt_JMAA2015, Batt_Adv2015, Batt2016, KLNW2022, LYZ2020} have devoted significant research efforts to classifying and empirically substantiating existence theorems for solutions to Toda systems. These systems are associated with any complex simple Lie algebra and feature cone singularities on compact Riemann surfaces (Definition \ref{def:sol_cone}).
In the next three subsections, we aim to highlight the principal advancements that are pertinent to our thematic focus within this field. To lay the foundation for our discussion, we initially explore the basic correspondence between solutions to the \(\mathrm{SU}(n+1)\) Toda system with cone singularities and their associated unitary curves in \(\mathbb{P}^n\) with regular singularities (Definition \ref{def:curve_reg_sing}) in the first two subsections.

\subsection{A differential-geometric framework for solutions to Toda systems}

Building on the seminal contributions of Givental \cite{Giv1989} and Positselski \cite{Pos1991}, this manuscript establishes on Riemann surfaces a new conceptual framework for solutions to \(\mathrm{SU}(n+1)\) Toda system with cone singularities (Definition \ref{def:sol_cone}).
 As elucidated in 
\cite[Theorem 1.2. (ii)]{MSSX2024}, our analytical approach scrutinizes the solutions as \(n\)-tuples of Kähler metrics with cone singularities on Riemann surfaces. Importantly, our methodology not only aligns with but also translates into the ones employed by a spectrum of researchers including Jost-Wang \cite{JW:2002}, Lin-Wei-Ye \cite{LWY:2012}, Battaglia \cite{Batt_JMAA2015, Batt_Adv2015, Batt2016}, Lin-Yang-Zhong \cite{LYZ2020}, Chen-Lin \cite{CL2022, CL2023}, and Chen \cite{Chen2023}, thereby establishing a conceptual equivalence across these varied approaches. This framework is versatile, allowing for an extension to Toda systems associated with any complex simple Lie algebra on K\"ahler manifolds.

Consider a Riemann surface $\mathfrak{X}$, not necessarily compact, alongside a closed discrete subset $\mathfrak{S}$ within $\mathfrak{X}$, where notably, $\mathfrak{S}$ is at most countable and possibly empty. We consider a vector $\vv{\omega}$ of Kähler metrics on the punctured Riemann surface $\mathfrak{X}\setminus\mathfrak{S}$,  defined as
\begin{equation}
\label{equ:metric_vector}    
\vv{\omega}:=\left(\omega_1=\frac{{\rm i}}{2}e^{u_1}\mathrm{d}z\wedge \mathrm{d}\bar{z}, \ldots, \omega_n=\frac{{\rm i}}{2}e^{u_n}\mathrm{d}z\wedge \mathrm{d}\bar{z}\right),
\end{equation}
together with the corresponding vector $\mathrm{Ric}\big(\vv{\omega}\big)$  of Ricci $(1,1)$-forms derived from the metric components of $\vv{\omega}$,
\[
\mathrm{Ric}\big(\vv{\omega}\big):=\left(\mathrm{Ric}(\omega_1)=-{\rm i}\partial \bar{\partial} u_1, \ldots, \mathrm{Ric}(\omega_n)=-{\rm i}\partial \bar{\partial} u_n\right).
\]
We define $\vv{\omega}$ as a \textit{solution} to the $\mathrm{SU}(n+1)$ Toda system on $\mathfrak{X}\setminus\mathfrak{S}$ if it satisfies the equation
\begin{equation}
\label{equ:TodaSys}
\mathrm{Ric}(\vv{\omega})=\vv{\omega} \cdot (2a_{ij})_{n\times n}
\end{equation}
on $\mathfrak{X}\setminus\mathfrak{S}$, where $(a_{ij})$ is the Cartan matrix for $\mathfrak{su}(n+1)$, represented by
\begin{equation*}
(a_{ij})_{n\times n}=\begin{pmatrix}
        2 & -1 & 0 & \ldots  & \ldots  & 0\\
        -1 & 2 & -1 & 0 & \ldots & 0\\
        0 & -1 & 2 & -1 & \ldots & 0\\
        \vdots & \vdots & \vdots & \vdots & \ddots & \vdots\\
        0 & \ldots & 0 & -1 & 2 & -1\\
        0 & \ldots & 0 & 0 & -1 & 2
\end{pmatrix}_{n\times n}.
\end{equation*}
This system, $\vv{\omega} = \mathrm{Ric}\big(\vv{\omega}\big) \cdot (2a_{ij})_{n\times n}$, is identified as the $\text{SU}(n+1)$ \textit{Toda system on the Riemann surface}, introducing a factor of $2$ for consistency with the notation in \cite{MSSX2024}. 
It is important to note that a solution \(\vv{\omega}\) to \eqref{equ:TodaSys} can exhibit wild behavior in the vicinity of punctures within \(\mathfrak{S}\). For instance, the integral of \(\omega_j\) in a neighborhood around a point in \(\mathfrak{S}\) may diverge. 
Since the work of Jost-Lin-Wang \cite[Proposition 3.1.]{JLW:2006}, there has been an ongoing investigation into solutions of Toda systems with  cone singularities. We will soon provide a precise definition for these solutions. 
To specify, we assign to each point $P \in \mathfrak{S}$ a vector $\vv{\gamma_P} = (\gamma_{P,1}, \ldots, \gamma_{P,n})$ of real numbers which are greater than $-1$ and do not vanish simultaneously, encapsulated into the $\mathbb{R}$-divisor vector $\vv{\mathfrak{D}}:=\big({\mathfrak D}_1,\ldots, {\mathfrak D}_n\big)$  with
$
\mathfrak{D}_1= \sum_{P \in \mathfrak{S}} \gamma_{P,1}[P], \ldots, \mathfrak{D}_n= \sum_{P \in \mathfrak{S}} \gamma_{P,n}[P]
$.
Denoting by $\delta_{\vv{\mathfrak D}}$ the following vector of $(1,1)$-currents \cite[Chapter 3]{GH:1994},
\begin{equation}
\label{equ:delta}
\delta_{\vv{\mathfrak D}}:=2\pi\left(\sum_{P \in \mathfrak{S}} \gamma_{P,1}\delta_P,\ldots, 
\sum_{P \in \mathfrak{S}} \gamma_{P,n}\delta_P\right),
\end{equation}
we give the following:

\begin{definition}
\label{def:sol_cone}
{\rm 
We consider $\vv{\omega}$ 
a  {\it solution} to the {\it $\mathrm{SU}(n+1)$ Toda system on $\mathfrak{X}$ with cone singularities} $\vv{\mathfrak{D}}$
(i.e., {\it representing $\vv{\mathfrak{D}}$}) if it has finite area over each compact subset $K$ of $\mathfrak{X}$, i.e. $\int_K\,\omega_j<\infty$ for all $1\leq j\leq n$, 
and satisfies the system
\begin{equation}
\label{equ:TodaSysCone}
\mathrm{Ric}\big(\vv{\omega}\big)=-\delta_{\vv{\mathfrak D}}+\vv{\omega} \cdot (2a_{ij})_{n\times n}
\end{equation}
in the sense of $(1,1)$-current on $X$.  In particular, 
this system, when restricted to a sufficiently small chart $(U,z)$ around each $P\in \mathfrak{S}$, takes the following form:
\begin{equation*}
\frac{\rm i}{2}\Big(\partial\bar{\partial}u_i+\sum\limits_{j=1}^n a_{ij}e^{u_j}\mathrm{d}z\wedge \mathrm{d}\bar{z} \Big)=\pi\gamma_{P,i}\delta_P,\quad 1 \leq i \leq n.    
\end{equation*}
By \cite[Theorem 1.2. (ii)]{MSSX2024}, for each $1\leq i\leq n$, the component $\omega_i=\frac{{\rm i}}{2}e^{u_i}\mathrm{d}z\wedge \mathrm{d}\bar{z}$ of a solution $\vv{\omega}$ to \eqref{equ:TodaSysCone} forms a cone K\"ahler metric on $X$ with cone angle $2\pi(1+\gamma_{P, i})$ at $P\in \mathfrak{S}$. In this context, we say that $\vv{\omega}$ is a {\it solution with cone singularities} $\vv{\mathfrak{D}}$ (i.e., \textit{representing $\vv{\mathfrak{D}}$}). }
\end{definition}

\noindent It is noteworthy that when \(n=1\), the term \(\vv{\omega}=\omega_1\) denotes a cone spherical metric that represents the divisor \(\mathfrak{D}_1\) on $\mathfrak{X}$. For further insights into cone spherical metrics, see \cite{LX2023} and the references contained therein.
Unlike the conventional definition of the Toda system with singularities,  our intrinsic approach does not rely on a predetermined background K\"ahler metric and situates the problem within the context of Kähler Geometry. 
Specifically, our ongoing project is dedicated to investigating vectors of K\"ahler metrics with cone singularities along divisors \cite{Don2012} as potential solutions to the Toda systems associated with any complex simple Lie algebra on K\"ahler manifolds. This approach naturally extends both the Toda system \eqref{equ:TodaSysCone} on Riemann surfaces and the Monge-Ampère equation concerning cone K\"ahler-Einstein metrics with positive scalar curvatures on K\"ahler manifolds of dimension $\geq 2$ (\cite{Don2012}).

\subsection{A basic correspondence}
\label{subsec:corr}
Drawing upon the  work of Gervais-Matsuo \cite[Section 2.2.]{GM:1993}, Jost-Wang \cite[Section 3]{JW:2002}, and \cite[Section 2]{MSSX2024}, in this
subsection, we shall present a concise overview of the basic correspondence between solutions to the ${\rm SU}(n+1)$ Toda systems with cone singularities $\vv{\mathfrak{D}}$ on $\mathfrak{X}$, and totally unramified unitary curves on $\mathfrak{X}\setminus \mathfrak{S}$ and with regular singularities $\vv{\mathfrak{D}}$. This overview lays a robust foundation for articulating the main results of our manuscript and situating them within the context of classical findings such as \cite{GM:1993, JW:2002, LWY:2012, LNW2018}. 

A \textit{totally unramified unitary curve} $f$ on $\mathfrak{X}\setminus \mathfrak{S}$ is a multi-valued holomorphic map $f:\mathfrak{X}\setminus \mathfrak{S}\to \mathbb{P}^n$, characterized by the following properties:
\begin{itemize}
    \item The monodromy of $f$ resides within ${\rm PSU}(n+1)$, the group of holomorphic isometries preserving the Fubini-Study metric $\omega_{\rm FS}$ on $\mathbb{P}^n$.
    \item At each point $z \in \mathfrak{X}\setminus \mathfrak{S}$, any germ $\mathfrak{f}$ of $f$ at $z$ is totally unramified and, notably, non-degenerate, near $z$.
\end{itemize}
For a more detailed exposition of  this concept, please refer to \cite[Definition 2.1.]{MSSX2024}. 

\begin{definition}
\label{def:curve_reg_sing}  
{\rm 
We define a totally unramified unitary curve $f:\mathfrak{X}\setminus \mathfrak{S}\to\mathbb{P}^n$ to have {\it regular singularities} 
$$\vv{\mathfrak{D}}=\left(\sum_{P\in \mathfrak{S}}\,\gamma_{P,1}[P],\ldots, \sum_{P\in \mathfrak{S}}\,\gamma_{P,n}[P]\right)$$
i.e., {\it representing} $\vv{\mathfrak{D}}$, if, for each point $P \in \mathfrak{S}$, there exists an element $\varphi \in {\rm PSU}(n+1)$ such that the composition $\varphi \circ f$ when restricted to a punctured disk $\{0 < |z| < 1\}\subset\mathfrak{X}\setminus \mathfrak{S}$ around $P$ can be expressed as
\begin{equation}
\label{equ:reg_sing}
\varphi\circ f(z) = \left[z^{\beta_{P,0}}g_0(z): z^{\beta_{P,1}}g_1(z): \ldots : z^{\beta_{P,n}}g_n(z)\right],
\end{equation}
where the functions $g_0(z), \ldots, g_n(z)$ are holomorphic and nonvanishing at $z=0$, i.e., $P$. The exponents $\beta_{P,0}, \ldots, \beta_{P,n}$ are real numbers satisfying 
\begin{equation}
\label{equ:gamma_to_beta}
\beta_{P,j}-\beta_{P,j-1}=\gamma_{P,j}+1\quad \text{for all}\quad 1\leq j\leq n.
\end{equation}
In this context, given a point $P$ in $\mathfrak{S}$, if all $\gamma_{P,j}$'s are non-negative integers and do not vanish simultaneously, then $P \in \mathfrak{S}$ is referred to as a {\it ramification point} of $f$  (\cite[pp.\,266-268]{GH:1994}); if at least one of $\{\gamma_{P,j}\}_{j=1}^n$ is non-integer, then $P$ is called a {\rm branch point} of $f$. Both ramification points and branch points are called {\it regular singularities} of $f$.
It is important to note that  $f$ is totally unramified at a point $P \in X$ if and only if $\gamma_{P,j} = 0$ for all $1 \leq j \leq n$. 
}
\end{definition}

From a totally unramified unitary curve $f:\mathfrak{X}\setminus \mathfrak{S}\to\mathbb{P}^n$, one can derive a solution to the ${\rm SU}(n+1)$ Toda system \eqref{equ:TodaSys} on $\mathfrak{X} \setminus \mathfrak{S}$. In fact, for $0 \leq j \leq n-1$, the $j$-th associated curve 
$f_j : \mathfrak{X} \setminus \mathfrak{S} \to G(j+1, n+1) \subset \mathbb{P}\left(\Lambda^{j+1}\mathbb{C}^{n+1}\right)$
of $f$ is also a unitary curve (\cite[pp.\,263-264]{GH:1994} and \cite[Definition 2.1.]{MSSX2024}). For simplicity, 
we uniformly adopt the symbol $\omega_{\rm FS}$ to represent the Fubini-Study metrics on $\mathbb{P}\left(\Lambda^{j+1}\mathbb{C}^{n+1}\right)$ for all $0 \leq j \leq n-1$.
By employing the infinitesimal Pl\"ucker formula, 
$f : \mathfrak{X} \setminus \mathfrak{S} \to \mathbb{P}^n$ 
produces a vector  of K\"ahler metrics
\begin{equation}
\label{equ:curve_to_sol}  
\vv{\omega} = \left(f_0^*\omega_{\rm FS}, \ldots, f_{n-1}^*\omega_{\rm FS}\right),
\end{equation} 
which serves as a solution to \eqref{equ:TodaSys} on $\mathfrak{X} \setminus \mathfrak{S}$, as delineated in \cite[Lemma 2.2]{MSSX2024}. Conversely, each solution $\vv{\omega}$ to the ${\rm SU}(n+1)$ Toda system \eqref{equ:TodaSys} on $\mathfrak{X} \setminus \mathfrak{S}$ gives rise to
a series of totally unramified unitary curves $\mathfrak{X}\setminus \mathfrak{S}\to\mathbb{P}^n$, where any two are distinguishable by
a post-composition of an element in ${\rm PSU}(n+1)$. Furthermore, these curves reconstruct the solution $\omega$ as defined in \eqref{equ:curve_to_sol}, and they are termed {\it curves associated with} $\omega$. The intricacies will be expounded in Lemma \ref{lem:corr}. 
Therefore, we have finalized the exposition detailing the correspondence between solutions to the ${\rm SU}(n+1)$ Toda system, as described in \eqref{equ:TodaSys}, and totally unramified unitary curves  on $\mathfrak{X} \setminus \mathfrak{S}$. In Theorem \ref{thm:corr_sing}, we further elucidate a refined correspondence between solutions to the ${\rm SU}(n+1)$ Toda system \eqref{equ:TodaSysCone}  with cone singularities $\vv{\mathfrak{D}}$ on $\mathfrak{X}$, and totally unramified unitary curves on $\mathfrak{X} \setminus \mathfrak{S}$ with regular singularities $\vv{\mathfrak{D}}$.

\subsection{Exploring the concepts of toric curves and toric solutions}

We introduce the following innovative concept associated with the \(\mathrm{SU}(n+1)\) Toda system, inspired by its \(n=1\) scenario --- specifically, the reducible cone spherical metric \cite{UY:2000, CWWX:2015, MP:2015, Eremenko:2020}.

\begin{definition}
\label{def:toric}{\rm 
A \textit{toric curve} \(f: \mathfrak{X} \setminus \mathfrak{S}\) is defined as a totally unramified unitary curve whose monodromy resides within a maximal torus of \(\mathrm{PSU}(n+1)\). A solution \(\omega\) to the \(\mathrm{SU}(n+1)\) Toda system on \(\mathfrak{X} \setminus \mathfrak{S}\) is termed \textit{toric} if it yields toric associated curves. This leads to the concepts of a \textit{toric curve with regular singularities} $\vv{\mathfrak{D}}$ and a \textit{toric solution  with cone singularities} $\vv{\mathfrak{D}}$, respectively.}
\end{definition}

Having established the foundation for our methodology in addressing Toda systems, now is an opportune moment to review the classifications of solutions and the significant findings regarding their existence.
The pioneering effort by Jost-Wang \cite{JW:2002} in 2002 revealed that curves associated with finite-area solutions to the ${\rm SU}(n+1)$ Toda system on the complex plane $\mathbb{C}$  extend to rational normal curves from $\mathbb{P}^1$ to $\mathbb{P}^n$. By this, they obtained a thorough classification of solutions on $\mathbb{P}^1$. In 2007, Eremenko \cite[Theorem 2]{Ere:2007} further classified curves linked to solutions expanding polynomially in area at $\infty$ of $\mathbb{C}$.
Continuing this trajectory, Lin-Wei-Ye \cite{LWY:2012}, five years on, managed to classify all solutions to the ${\rm SU}(n+1)$ system with two cone singularities on $\mathbb{P}^1$. Simultaneously, they also characterized the corresponding cone singularities.
Building on this, quite recently, Karmakar-Lin-Nie-Wei \cite{KLNW2022}  broadened the scope to include any Toda system tied to a complex simple Lie algebra. 
The preceding solutions are all toric since the fundamental groups of the underlying surfaces are either trivial or isomorphic to 
${\Bbb Z}$. In 2018, Lin-Nie-Wei \cite{LNW2018} obtained some existence results about solutions to the ${\rm SU}(n+1)$ system 
with three cone singularities on the Riemann sphere. 
Between 2015 and 2016, Battaglia \cite{Batt_JMAA2015, Batt_Adv2015, Batt2016} laid down extensive theorems on the existence and absence of solutions for the ${\rm SU}(3)$ Toda system with cone singularities on compact Riemann surfaces. Following suit, Lin-Yang-Zhong \cite{LYZ2020} made significant strides by proving existence theorems for Toda systems related to Lie algebras of types $A_n$, $B_n$, $C_n$, and $G_2$, featuring cone singularities on compact Riemann surfaces with positive genera. More recently, Chen-Lin \cite{CL2022, CL2023} have unveiled numerous findings regarding the ${\rm SU}(3)$ Toda system with cone singularities on tori.
In their most recent collaboration \cite{MSSX2024}, Sun and the authors of this manuscript meticulously categorized the solutions to the \({\rm SU}(n+1)\) Toda system, delineated on the disk \({|z| < 1}\). These solutions are characterized by a cone singularity at \(z=0\), possess finite area, and inherently exhibit the toric property.

In the left part of this introductory section, we shall focus on toric solutions to the ${\rm SU}(n+1)$ Toda system with cone singularities 
\[\vv{D}=\left(D_1=\sum_{i=1}^k\, \gamma_{i,1}[P_i],\ldots, D_n=\sum_{i=1}^k\, \gamma_{i,n}[P_i]\right),\]
where $P_1,\ldots, P_k$ are distinct points 
on a {\it compact} Riemann surface $X$, 
and $\Big(\gamma_{i,j}\Big)_{\substack{1\leq i\leq k \\ 1\leq j\leq n}}$ is a $k\times n$ matrix of real numbers greater than $-1$ such that for each $1\leq i\leq k$, at least one of $\{\gamma_{ij}\}_{1\leq j\leq n}$ does not vanish.   
We call $\{P_1,\cdots, P_k\}$ the {\it support}, and $\Big(\gamma_{i,j}\Big)_{\substack{1\leq i\leq k \\ 1\leq j\leq n}}$ the {\it coefficient matrix} of $\vv{D}$.
Chen, Wang, Wu, and the last author \cite[Theorems 1.4. and 1.5.]{CWWX:2015} established on \(X\) a correspondence between toric solutions to the \(\mathrm{SU}(2)\) Toda system with cone singularities, namely reducible cone spherical metrics, and meromorphic one-forms with simple poles and purely imaginary periods on \(X\). This correspondence is particularly noteworthy as it allows for the explicit characterization of the singularity information of a reducible metric in terms of the corresponding one-form, which was referred to as the \textit{character one-form} of the metric therein. 
Moreover, meromorphic one-forms with simple poles and purely imaginary periods are plentifully available on Riemann surfaces, cf. \cite[$\S$15]{Weyl1964}, \cite[$\S$8-1]{Springer1981} and \cite[$\S$II.4-5]{FK1992} for precise statements and their proof. In particular,  the explicit formulation of such one-forms was meticulously documented in \cite[Example 4.7.]{CWWX:2015} on the Riemann sphere. 
To generalize this correspondence for toric solutions to  ${\rm SU}(n+1)$ Toda system with cone singularities on $X$,
we introduce the following:

\begin{definition}
\label{def:ensemble}
{\rm 
A \textit{character $n$-ensemble} on \(X\) is defined as a vector \(\vv{\Omega} = (\Omega_1, \ldots, \Omega_n)\), composed of $n$ meromorphic one-forms with simple poles and purely imaginary periods on \(X\) such that there exist finitely many points \(P_1, \ldots, P_k\) on \(X\) and 
\begin{equation}
\label{equ:ensemble}
f_{\vv{\Omega},\vv{\rho}}(z) := \left[1: \rho_1\cdot\exp\left(\int^z \Omega_1\right): \ldots :\rho_n\cdot\exp\left(\int^z \Omega_n\right)\right]
\end{equation}
generates a family of totally unramified unitary curves mapping from \(X \setminus \{P_1, \ldots, P_k\}\) to \(\mathbb{P}^n\), where
$\vv{\rho}=(\rho_1,\ldots,\rho_n)$'s vary over all vectors of $n$ positive real numbers. Importantly, these curves exhibit monodromy within the diagonal maximal torus \(\mathbb{T}^n\) of ${\rm PSU}(n+1)$, specified by
\begin{equation*}
    \mathbb{T}^n := \left\{ \varphi([z_0: \ldots: z_n]) = \left[e^{\mathrm{i}\theta_0}z_0: \ldots: e^{\mathrm{i}\theta_n}z_n\right] \, \middle| \, (\theta_0, \ldots, \theta_n) \in \mathbb{R}^{n+1} \right\}.
\end{equation*}
Hence, they are inherently toric curves from \(X \setminus \{P_1, \ldots, P_k\}\) to \(\mathbb{P}^n\).
Within the given context, it can be demonstrated that the preceding curves described inherently possess the same regular singularities, denoted by $\vv{D}$. These singularities are characterized by a support $\{P_1, \cdots, P_k\}$ and their coefficient matrix 
$\Big(\gamma_{i,j}\Big)_{\substack{1\leq i\leq k\\ 1\leq j\leq n}}$, both of which are 
implicitly determined by the $n$-ensemble $\vv{\Omega}$ (Proposition \ref{prop:ensemble} and Lemma \ref{lem:n-tuple}).}
\end{definition}

\subsection{Main results}

We establish on $X$ the following general correspondence for the ${\rm SU}(n+1)$ Toda system which puts the one in \cite{CWWX:2015} as its $n=1$ scenario.

\begin{theorem}
\label{thm:corr}
There exists a correspondence between character $n$-ensembles and toric solutions \(\omega\) to the \(\mathrm{PSU}(n+1)\) Toda system with cone singularities  on a compact Riemann surface \(X\). Specifically, the following two statements hold:
\begin{enumerate}
    \item Given a character $n$-ensemble \(\vv{\Omega}\) on \(X\), it induces a family of toric curves via \eqref{equ:ensemble} with 
    regular singularities $\vv{D}$ on \(X\), which in turn defines a corresponding family of toric solutions representing $\vv{D}$,  via \eqref{equ:curve_to_sol}.
    \item Given a toric solution \(\vv{\omega}\) with cone singularities \(\vv{D}\) on \(X\), it has an associated toric curve \(f=[f_0,\ldots, f_n]\) with monodromy in $\mathbb{T}^n$ and with regular singularities \(\vv{D}\) on \(X\) such that 
    \[\Omega:=\left(\mathrm{d}\left[\log \frac{f_1}{f_0}\right],\ldots, \mathrm{d}\left[\log \frac{f_n}{f_0}\right]  \right)\]
    forms a character $n$-ensemble on \(X\).
\end{enumerate}
\end{theorem}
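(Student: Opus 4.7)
The plan is to leverage the basic correspondence summarized in Subsection \ref{subsec:corr} (in its singular refinement, Theorem \ref{thm:corr_sing}) and to track the toric hypothesis carefully so as to extract the character $n$-ensemble from a toric solution. Part (1) is largely a direct unwinding of definitions; the substance lies in Part (2).

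\textbf{Part (1).} Given a character $n$-ensemble $\vv{\Omega}$, Definition \ref{def:ensemble} together with Proposition \ref{prop:ensemble} and Lemma \ref{lem:n-tuple} already asserts that $f_{\vv{\Omega},\vv{\rho}}$ defines a family of totally unramified unitary curves from $X \setminus \{P_1,\ldots,P_k\}$ to $\mathbb{P}^n$ with monodromy in $\mathbb{T}^n$, hence toric. Expanding $\Omega_j = r_{i,j}\,\mathrm{d}z/z + (\text{holomorphic})$ near each pole $P_i$ produces $\exp\!\big(\int^z \Omega_j\big) = z^{r_{i,j}} h_j(z)$ with $h_j$ holomorphic and nonvanishing at $P_i$; this identifies the regular singularities $\vv{D}$ via the relations \eqref{equ:gamma_to_beta} (setting $\beta_{i,0}=0$, $\beta_{i,j}=r_{i,j}$). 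The prescription \eqref{equ:curve_to_sol} together with the basic correspondence then yields toric solutions representing $\vv{D}$.

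\textbf{Part (2).} Suppose $\vv{\omega}$ is a toric solution with cone singularities $\vv{D}$. First, invoke Theorem \ref{thm:corr_sing} to produce an associated totally unramified unitary curve $f: X \setminus \{P_1,\ldots,P_k\} \to \mathbb{P}^n$ with regular singularities $\vv{D}$; the toric hypothesis on $\vv{\omega}$ means that the monodromy of $f$ lies in some maximal torus $T \subset \mathrm{PSU}(n+1)$. Since all maximal tori in $\mathrm{PSU}(n+1)$ are conjugate, pick $\varphi_0 \in \mathrm{PSU}(n+1)$ with $\varphi_0 T \varphi_0^{-1} = \mathbb{T}^n$ and replace $f$ by $\varphi_0 \circ f$; this is another associated curve, now with monodromy in the diagonal $\mathbb{T}^n$. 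Write $f = [f_0:\ldots:f_n]$. The monodromy action along any loop sends $f_j \mapsto e^{\mathrm{i}\theta_j} f_j$, so each ratio $f_j/f_0$ is multiplied by a unimodular constant; consequently $\Omega_j := \mathrm{d}\!\left[\log(f_j/f_0)\right]$ is a single-valued meromorphic one-form on $X \setminus \{P_1,\ldots,P_k\}$ whose period along every cycle equals $\mathrm{i}(\theta_j - \theta_0) \in \mathrm{i}\mathbb{R}$. At each $P_i$, the regular singularity hypothesis \eqref{equ:reg_sing} combined with monodromy in $\mathbb{T}^n$ forces the local conjugator $\varphi$ into the normalizer $N(\mathbb{T}^n) = \mathbb{T}^n \rtimes S_{n+1}$: it permutes and rescales the coordinate eigenlines, so that locally $f_j(z) = z^{\beta_{i,\sigma(j)}}\cdot(\text{unit})$ for some permutation $\sigma$ depending on $P_i$. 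Hence $\Omega_j$ has a simple pole at $P_i$ with real residue $\beta_{i,\sigma(j)} - \beta_{i,\sigma(0)}$, and $\vv{\Omega} := (\Omega_1,\ldots,\Omega_n)$ extends to a vector of meromorphic one-forms on $X$ with simple poles and purely imaginary periods. Finally, $f$ is recovered from $\vv{\Omega}$ via \eqref{equ:ensemble} for a suitable $\vv{\rho}$, confirming that $\vv{\Omega}$ is a character $n$-ensemble.

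\textbf{Principal obstacle.} The main technical subtlety is reconciling the local normalization $\varphi$ from Definition \ref{def:curve_reg_sing} with the global monodromy constraint. When the local monodromy eigenvalues at $P_i$ are pairwise distinct, the inclusion $\varphi \in N(\mathbb{T}^n)$ is forced immediately; when some eigenvalues coincide (equivalently, when some exponents $\beta_{i,j}$ differ by integers), one must use the eigenspace block structure of the local monodromy together with the non-vanishing of each $g_k$ at $P_i$ to ensure that each $\Omega_j$ still possesses an \emph{honest} simple pole with a \emph{real} residue, rather than a higher-order pole or a pole with complex residue. Dispatching this degenerate case is where the regular singularity hypothesis, as opposed to a mere Fuchsian growth condition, becomes essential.
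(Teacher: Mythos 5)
Your proposal is correct and follows essentially the same route as the paper: conjugate the monodromy of an associated curve into the diagonal torus $\mathbb{T}^n$, show each local component has the form $z^{b_j}\cdot(\text{unit})$ near a singular point so that $\Omega_j=\mathrm{d}\log(f_j/f_0)$ has at most simple poles with real residues and purely imaginary periods, and conclude via Proposition \ref{prop:ensemble}; Part (1) is the same unwinding of Proposition \ref{prop:ensemble} and Lemma \ref{lem:n-tuple}. The only difference is that the paper delegates the local normal form (including the degenerate case of coincident monodromy eigenvalues that you flag as the principal obstacle) to the argument of \cite[Theorem 3.1]{MSSX2024}, whereas you sketch it directly via the block structure of the conjugator in $N(\mathbb{T}^n)$ — a correct and essentially equivalent resolution.
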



A natural question arises regarding the characterization of regular singularities on curves generated by a character ensemble, as defined in \eqref{equ:ensemble}. The following theorem offers a partial response to this inquiry.

\begin{theorem} 
\label{thm:sing}
Let $\vv{\Omega}$ be a character $n$-ensemble and $f$ one in the family of curves  generated by $\vv{\Omega}$ in terms of \eqref{equ:ensemble} on a compact Riemann surface $X$. Denote by $\mathcal{P}$ the set of poles of all $\Omega_j$'s. There holds the following statements{\rm :}

\begin{itemize}
\item[(1)] A point $P$ on $X$ is a branch point of $f$ if and only if it is a pole of some component $\Omega_j$ of $\vv{\Omega}$ such that the residue of $\Omega_j$ at $P$ is noninteger.  

\item[(2)] A zero of $\vv{\Omega}$, at which all components of $\vv{\Omega}$ vanish, is a ramification point of $f$. 
 
\item[(3)] An algorithm is presented that identifies the regular singularities \(\vv{D}\) of \(f\) in finitely many steps. Its details will be provided within the proof.
\end{itemize}

\end{theorem}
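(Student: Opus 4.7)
My plan starts by extracting the local form of $f$ near any candidate singular point $P$. Pick a holomorphic coordinate $w$ centered at $P$ and let $r_j:=\mathrm{Res}_P\Omega_j$ (so $r_j=0$ if $P$ is not a pole of $\Omega_j$; each $r_j$ is real because the periods of $\vv{\Omega}$ are purely imaginary). Writing $\Omega_j=(r_j/w+\sum_{m\geq 0}a_{j,m}w^m)\,dw$ and integrating yields
\[
F_j(w) := \rho_j\exp\!\Bigl(\int^w\Omega_j\Bigr) = \tilde{\rho}_j\,w^{r_j}G_j(w),\qquad G_j(0)=1,
\]
with $G_j$ holomorphic and nonvanishing near $0$. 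So locally $f(w)=[1:\tilde{\rho}_1 w^{r_1}G_1(w):\cdots:\tilde{\rho}_n w^{r_n}G_n(w)]$, a form already close to the normal form \eqref{equ:reg_sing}.

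For (1) I would argue via monodromy. The local monodromy of $f$ around $P$ represented in $\mathrm{PSU}(n+1)$ is the class of $\mathrm{diag}(1, e^{2\pi i r_1}, \ldots, e^{2\pi i r_n})$, which must coincide with the class of $\mathrm{diag}(e^{2\pi i \beta_{P,0}}, \ldots, e^{2\pi i \beta_{P,n}})$ read off from \eqref{equ:reg_sing}. Hence the multisets $\{r_j\}_{j=0}^n$ (with $r_0=0$) and $\{\beta_{P,j}\}_{j=0}^n$ agree modulo a common translation in $\mathbb{R}/\mathbb{Z}$. If all $r_j\in\mathbb{Z}$, every class is $0\bmod\mathbb{Z}$, so all $\beta_{P,j}$ share the same class, hence every $\gamma_{P,j}=\beta_{P,j}-\beta_{P,j-1}-1\in\mathbb{Z}$ and $P$ is (at worst) a ramification point. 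Conversely, if some $r_j\notin\mathbb{Z}$, the multiset carries two distinct classes in $\mathbb{R}/\mathbb{Z}$, forcing some $\gamma_{P,j}\notin\mathbb{Z}$, i.e.\ $P$ is a branch point.

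For (2) I would compute the Wronskian. Writing $\Omega_j=\omega_j(w)\,dw$, the identity $F_j'=F_j\,\omega_j$ and induction give $F_j^{(k)}=F_j\,P_k(\omega_j)$ for differential polynomials $P_k$ with $P_1(\omega)=\omega$. Expanding the Wronskian of the lift $\tilde f=(1,F_1,\ldots,F_n)$ along its first row,
\[
W(w)=\det\!\bigl[F_j^{(k)}\bigr]_{j,k=1}^n=\Bigl(\prod_{j=1}^n F_j(w)\Bigr)\det\!\bigl[P_k(\omega_j(w))\bigr]_{j,k=1}^n.
\]
At a common zero of $\vv{\Omega}$, the $k=1$ column of $[P_k(\omega_j)]$ is $(\omega_1(P),\ldots,\omega_n(P))^T=0$, so $W(P)=0$; together with (1) this forces $P$ to be a ramification point (no pole there, hence no branching possible).

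For (3) the algorithm would run in three stages. \emph{Stage A (locate)}: identify the finite set $\{P_1,\ldots,P_k\}$ as the union of the pole set of $\vv{\Omega}$ with the zero set of $W$ above; finiteness is guaranteed by the hypothesis that $\vv{\Omega}$ is a character $n$-ensemble. \emph{Stage B (expand)}: at each $P_i$ compute the residues $(r_0=0,r_1,\ldots,r_n)$ and as many further Laurent coefficients of the $\omega_j$ as Stage~C consults. \emph{Stage C (sort and split)}: the classes of $\{\beta_{P_i,j}\}\bmod\mathbb{Z}$ are already fixed by (1); within each class, iteratively apply a unitary rotation on the relevant block of coordinates chosen to cancel the leading coefficients of two components with identical exponent, thereby strictly raising one vanishing order within that class. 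Repeat until the $n+1$ exponents are distinct, sort to get $\beta_{P_i,0}<\cdots<\beta_{P_i,n}$, and output $\gamma_{P_i,j}=\beta_{P_i,j}-\beta_{P_i,j-1}-1$. The main obstacle will be Stage~C: verifying that each rotation preserves the shape $\tilde{\rho}\,w^{r'}\cdot(\text{holomorphic nonvanishing})$ required by \eqref{equ:reg_sing} and that the iteration terminates after finitely many steps; termination follows because every $\beta_{P_i,j}$ is bounded by the local vanishing order of $W$ at $P_i$, and each splitting step raises a vanishing order within its class by at least one.
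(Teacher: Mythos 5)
Your proposal is correct, and its overall architecture is the same as the paper's: expand each component locally as $w^{r_j}\cdot(\text{unit})$, detect branching through the residues modulo $\mathbb{Z}$, detect ramification through the vanishing of the Wronskian $\Lambda_n$, and extract the exponents by reducing to the quasi-canonical form at each candidate point. Two of your local arguments differ in flavor. For (1) you compare the eigenvalue multisets of the local monodromy in $\mathrm{PSU}(n+1)$ to conclude that the classes of $\{\beta_{P,0},\dots,\beta_{P,n}\}$ modulo $\mathbb{Z}$ coincide (up to common translation) with those of $\{0,r_1,\dots,r_n\}$; the paper instead carries out the block reduction of \cite[Theorem 3.1]{MSSX2024} directly. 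Your invariance argument is clean, but be aware it presupposes that the normal form \eqref{equ:reg_sing} exists at $P$, which is exactly what your Stage C (equivalently, the cited reduction) supplies — so the two steps are not logically independent. For (2) you factor the Wronskian as $\bigl(\prod_j F_j\bigr)\det\bigl[P_k(\omega_j)\bigr]$ and observe that the $k=1$ column vanishes at a common zero of $\vv{\Omega}$; the paper instead reads off from the germ $\left[1:C_1+z^{k_1+1}h_1(z):\cdots\right]$ that the first ramification index equals $\min(k_1,\dots,k_n)>0$, which yields the quantitative index your argument does not. Finally, in Stage C your termination bound should be phrased in terms of the differences of the exponents (equivalently, the order of the nonvanishing factor $G_n$ in $\Lambda_n=z^{\sum b_i-n(n+1)/2}G_n\cdot e_0\wedge\cdots\wedge e_n$), since at a pole of $\vv{\Omega}$ the $\beta_{P,j}$ and the order of $W$ can be negative or fractional; with that normalization your "each step strictly raises a vanishing order" argument closes exactly as in the paper.
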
 

Below, we present two examples of toric curves with regular singularities that yield novel toric solutions to the \({\rm SU}(n+1)\) Toda systems with cone singularities. These examples extend beyond the scope of quite recent existence results presented in \cite[Theorems 1.8-9.]{LYZ2020}.

\begin{example}
\label{exam:Omega}
Consider a compact Riemann surface \(X\) and let \(\Omega\) be a nontrivial meromorphic one-form on \(X\) that has simple poles and purely imaginary periods. Define the vector of scaled one-forms 
\[
\vv{\Omega} := (\lambda_1 \Omega, \ldots, \lambda_n \Omega),
\]
where \(\lambda_1, \ldots, \lambda_n\) are distinct nonzero real numbers. This vector \(\vv{\Omega}\) constitutes a character \(n\)-ensemble on \(X\) and generates a family of toric solutions that are parametrized by \((\mathbb{R}_{>0})^n\) as per Equation \eqref{equ:ensemble}.

Furthermore, the singularities of these toric solutions can be fully characterized by \(\vv{\Omega}\). In particular, all the cone singularities of the solutions are confined to the set of zeroes and poles of \(\Omega\). Specifically, at a zero \(\mathfrak{q}\) of \(\Omega\), each metric component in the toric solution exhibits a cone angle of 
$
2\pi \left(1 + \operatorname{ord}_{\mathfrak{q}} \Omega\right).
$
\end{example}


\begin{example}
\label{exam:RS1}  
Identify $\mathbb{P}^1$ with the Riemann sphere $\mathbb{C}\cup\infty$.
Consider the vector $(\gamma_1, \dots, \gamma_n) \in (-1, \infty)^n \setminus \mathbb{Z}^n$ and 
a positive integer $m$. Given $m$ distinct points $z_1, \dots, z_m$ on $\mathbb{C}\setminus \{0\}$ and
a matrix of non-negative integers $\Big(\gamma_{i,j}\Big)_{\substack{1 \leq i \leq m \\ 1 \leq j \leq n}}$, there exists a family of toric curves 
from $\mathbb{P}^1\setminus\{0,\infty,z_1,\ldots,z_m\}$ to $\mathbb{P}^n$ 
parametrized by $\big(\mathbb{R}_{>0}\big)^n$ satisfying the following properties:
\begin{itemize}
    \item All the curves have the same regular singularities. In particular, they have exactly two branch points at $0$ and $\infty$, with $\gamma_{[0],j} = \gamma_j$ for all $1 \leq j \leq n$;
    \item The points $z_1, \dots, z_m$ are all ramification points of them, where $\gamma_{[z_i],j} = \gamma_{i,j}$ for all $1 \leq i \leq m$ and $1 \leq j \leq n$.
\end{itemize}   
Moreover,  the vector $\left(\gamma_{[\infty],j}\right)_{j=1}^n$ has finitely many choices, bounded from above by 
\[
    \binom{\sum_{i=1}^m \big(n \gamma_{i,1} + (n-1) \gamma_{i,2} + \cdots + \gamma_{i,n}\big) + n + 1}{n}.
    \]
\end{example}

\noindent {\bf Outline.}
The introduction concludes with an outline of the structure for the remainder of this manuscript. 
Section 2 focuses on establishing the relationship between curves with regular singularities 
and solutions with cone singularities, as detailed in Theorem \ref{thm:corr_sing}. 
The proof of Theorem \ref{thm:sing} is presented in Section 3, followed by a detailed substantiation 
of Theorem \ref{thm:corr} in Section 4. Additionally, Section 3 introduces a new concept of a 
non-degenerate $n$-tuple of one-forms. This concept is simpler yet equivalent to the character 
$n$-ensemble, as demonstrated in Proposition \ref{prop:ensemble}.
Section 5 provides a comprehensive discussion of the two preceding examples.
In the final section, we propose three open questions to guide further investigation.


\section{Correspondence between curves and solutions}
In this section, we shall prove a lemma and a theorem that sequentially facilitate the establishment of the basic correspondence between solutions to the ${\rm SU}(n+1)$ Toda systems with cone singularities and unitary curves with regular singularities. This correspondence was initially outlined in Subsection \ref{subsec:corr}. Throughout, we will consistently use the notations previously introduced.

\begin{lemma}
\label{lem:corr}
\begin{itemize}
    \item[(1)] \textbf{From Curve to Solution{\rm :}} A totally unramified unitary curve \(f:\mathfrak{X}\setminus\mathfrak{S} \rightarrow \mathbb{P}^n\) induces a solution \(\vv{\omega} := \big(f_0^*\omega_{\rm FS}, \ldots, f_{n-1}^*\omega_{\rm FS}\big)\) to the \({\rm SU}(n+1)\) Toda system on \(\mathfrak{X}\setminus\mathfrak{S}\). Moreover, any curve \( \varphi \circ f \), with \(\varphi\) in \({\rm PSU}(n+1)\), produces the same solution as \(f\).
    \item[(2)] \textbf{From Solution to Curve{\rm :}} Every solution \(\vv{\omega}\) to the \({\rm SU}(n+1)\) Toda system on \(\mathfrak{X}\setminus\mathfrak{S}\) corresponds to at least one totally unramified unitary curve \(f:\mathfrak{X}\setminus\mathfrak{S}\rightarrow \mathbb{P}^n\) that generates \(\vv{\omega}\). Furthermore, any other curve that corresponds to \(\vv{\omega}\) will be in the form of \( \varphi \circ f \) for some \(\varphi\) in \({\rm PSU}(n+1)\).
\end{itemize}
\end{lemma}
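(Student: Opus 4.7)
\medskip

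\noindent\textbf{Proof proposal.}
For Part (1), the plan is to reduce everything to the classical infinitesimal Pl\"ucker formula \cite[p.\,269]{GH:1994}. Working in a simply-connected coordinate chart $(U,z)\subset \mathfrak{X}\setminus \mathfrak{S}$, I would pick a holomorphic lift $F:U\to \mathbb{C}^{n+1}\setminus\{0\}$ of $f$ so that the $j$-th associated curve $f_j$ is represented by the Wronskian $\Lambda_j:=F\wedge F'\wedge\cdots\wedge F^{(j)}$, and then use the identity
\[
f_j^{*}\omega_{\rm FS}=\tfrac{\mathrm{i}}{2}\,\partial\bar\partial\log\frac{\|\Lambda_j\|^2\|\Lambda_{j-2}\|^2}{\|\Lambda_{j-1}\|^4}
\]
(with the convention $\Lambda_{-1}=\Lambda_{n}=1$), which is precisely the $j$-th Toda equation \eqref{equ:TodaSys} once one sets $e^{u_j}\,\mathrm{d}z\wedge\mathrm{d}\bar z$ equal to the density of $f_{j-1}^{*}\omega_{\rm FS}$. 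The identity is independent of the lift $F$ (any other lift differs by a nonvanishing holomorphic scalar, which cancels in the Laplacian) and independent of the chart (both sides are $(1,1)$-forms); these two points together with invariance of $\omega_{\rm FS}$ under $\mathrm{PSU}(n+1)$ immediately yield the last claim in (1).

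For Part (2), my approach is to invert the construction locally by solving an ODE system and then to globalize via monodromy. Starting from densities $e^{u_1},\ldots,e^{u_n}$ of $\vv{\omega}$ on a simply-connected chart, I would reinterpret the Toda system \eqref{equ:TodaSys} as the integrability condition for the Frenet-type linear system satisfied by an orthonormal unitary frame along the osculating flag: the diagonal of the connection encodes the $u_j$'s and the off-diagonal entries encode the higher fundamental forms, which are determined (up to holomorphic gauge) by the densities. Solving this holomorphic linear ODE with any unitary initial frame at a basepoint produces holomorphic vectors $v_0(z),\ldots,v_n(z)$ in $\mathbb{C}^{n+1}$ whose osculating flag has the prescribed Hermitian Gram data, and one takes $f:=[v_0]$. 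Total unramifiedness and non-degeneracy follow from the strict positivity of the $e^{u_j}$'s, which forces all $\Lambda_j$'s obtained this way to be nonvanishing.

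To globalize, I would analytically continue $f$ along paths in $\mathfrak{X}\setminus\mathfrak{S}$. Two local solutions produced on an overlap both induce the same $\vv{\omega}$, so once the following rigidity step is established, they must differ by an element of $\mathrm{PSU}(n+1)$; consequently, analytic continuation around a loop returns a well-defined monodromy in $\mathrm{PSU}(n+1)$, yielding a multi-valued totally unramified unitary curve. The rigidity step --- any two totally unramified unitary curves on the same simply-connected open set that induce the same $\vv{\omega}$ differ by a unique element of $\mathrm{PSU}(n+1)$ --- I would prove by comparing their osculating flags: the Gram matrices $\bigl(\langle v_i,v_j\rangle\bigr)$ are determined by $\vv{\omega}$, hence there exists a unique $U\in\mathrm{U}(n+1)$ mapping one flag-frame to the other at the basepoint, and the ODE uniqueness transports this equality to the whole chart.

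The main obstacle I expect is the rigidity step, more precisely checking that the compatibility conditions (Maurer--Cartan equations) for the Frenet system reduce exactly to the Toda system \eqref{equ:TodaSys} --- nothing more, nothing less. This is a classical computation in the spirit of \cite{GM:1993, JW:2002}, but bookkeeping the factors of $2$ coming from our normalization \eqref{equ:TodaSys} and verifying that the boundary cases $j=1$ and $j=n$ (where $\Lambda_{-1}$ and $\Lambda_n$ are trivial) match the first and last rows of the Cartan matrix is the delicate part that the proof will have to address carefully.
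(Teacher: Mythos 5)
Your proposal is correct in outline, and part (1) is essentially the paper's route: the paper simply defers the Wronskian/Pl\"ucker computation to \cite[Lemma 2.2]{MSSX2024} and disposes of the last claim by $\mathrm{PSU}(n+1)$-invariance of $\omega_{\rm FS}$, exactly as you do. (One slip: your displayed identity equates $f_j^{*}\omega_{\rm FS}$ with $\tfrac{\mathrm i}{2}\partial\bar\partial\log\bigl(\|\Lambda_j\|^2\|\Lambda_{j-2}\|^2/\|\Lambda_{j-1}\|^4\bigr)$; the Pl\"ucker formula says $f_j^{*}\omega_{\rm FS}=\tfrac{\mathrm i}{2}\partial\bar\partial\log\|\Lambda_j\|^2$, whose \emph{density} is the displayed ratio --- your right-hand side is the negative of the Ricci form of $\omega_j$, not the metric $f_j^{*}\omega_{\rm FS}$. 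The surrounding prose shows you mean the correct computation, but the formula as written should be fixed.) For part (2) you take a genuinely different route. The paper gets local existence by citing \cite[Lemma 2.3]{MSSX2024}, gets uniqueness up to $\mathrm{PSU}(n+1)$ from Calabi's local rigidity theorem applied to the single metric $\omega_1=f_0^{*}\omega_{\rm FS}$, and then globalizes with the developing-map construction; you instead reconstruct the curve by integrating a Frenet-type linear system and prove rigidity by comparing Gram data and invoking ODE uniqueness. Your approach is more self-contained and makes the monodromy representation explicit, but it carries the burden the paper avoids: you must actually verify that the Maurer--Cartan/integrability conditions of your frame system reduce precisely to \eqref{equ:TodaSys} (you flag this yourself), and you must handle the gauge ambiguity in passing from the densities $e^{u_j}$ --- which determine only certain $\partial\bar\partial\log$ combinations of the $\|\Lambda_j\|^2$ --- to actual norms and Gram matrices of a lift; Calabi's theorem, which needs only $\omega_1$, sidesteps both issues. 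Neither point is a fatal gap, but both need to be written out for your part (2) to be complete.
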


\begin{proof}

\noindent (1) Consider a totally unramified unitary curve \(f : \mathfrak{X} \setminus \mathfrak{S} \to \mathbb{P}^n\). For the definition of the $j$-th associated function \(f_j\) where \(0 \leq j \leq n\), we refer to \cite[pp.\,263-264]{GH:1994} and \cite[Definition 2.1]{MSSX2024}. The proof of the first statement closely mirrors that of Lemma 2.2 in \cite{MSSX2024}, which specifically addresses the plane domain scenario.
The second statement follows from the fact that each element $\varphi$ of \({\rm PSU}(n+1)\) preserves the Fubini-Study metric on 
\(\mathbb{P}^n\).

(2) Consider a solution $\vv{\omega} = (\omega_1, \ldots, \omega_n)$ to the ${\rm SU}(n+1)$ Toda system on $\mathfrak{X} \setminus \mathfrak{S}$. Fix a point $\mathfrak{p}$ on $\mathfrak{X} \setminus \mathfrak{S}$. Restricting $\vv{\omega}$ to a disc chart $(D, |z|<1)$ around $\mathfrak{p}$, represented as $\vv{\omega}|_D$, results in a solution over $D$. Employing \cite[Lemma 2.3.]{MSSX2024}, this restriction leads to the formation of a totally unramified holomorphic curve $\mathfrak{f}_{\mathfrak{p}}: D \to \mathbb{P}^n$, which specifically induces $\vv{\omega}|_D$. Notably, the Fubini-Study form pullback via $\mathfrak{f}_{\mathfrak{p}}$, denoted $\mathfrak{f}_{\mathfrak{p}}^* \omega_{\rm FS}$, coincides with the restricted $\omega_1$ on $D$.
According to the local rigidity theorem posited by Eugenio Calabi (refer to \cite[Theorem 9]{Calabi1953} and \cite[(4.12)]{Griffiths:1974}), the curve $\mathfrak{f}_{\mathfrak{p}}$ is uniquely determined by $\vv{\omega}$, up to a transformation in ${\rm PSU}(n+1)$. Further, leveraging the developing map concept outlined in \cite[$\S$3.4]{Thurston1997}, a unique, totally unramified unitary curve $f: \mathfrak{X} \setminus \mathfrak{S} \to \mathbb{P}^n$ can be realized through analytic continuations of $\mathfrak{f}_{\mathfrak{p}}$ along curves on $\mathfrak{X} \setminus \mathfrak{S}$. Its monodromy results in a group homomorphism 
$$\mathcal{M}_f:\pi_1\big(\mathfrak{X} \setminus \mathfrak{S},\,\mathfrak{p}\bigr)\to {\rm PSU}(n+1).$$
This curve $f$ not only induces $\vv{\omega}$ as initially described in (1) but is also uniquely characterized by the same local rigidity theorem within the confines of ${\rm PSU}(n+1)$.
\end{proof}

The lemma above can be generalized to include cases with cone (regular) singularities as follows:

\begin{theorem}
\label{thm:corr_sing}
We use the notations from \textnormal{Definitions~\ref{def:sol_cone} and \ref{def:curve_reg_sing}}.
\begin{itemize}
    \item[(1)] \textbf{From curve with regular singularities to solution with cone singularities\,{\rm:}}
    A totally unramified unitary curve \(f : \mathfrak{X} \setminus \mathfrak{S} \to \mathbb{P}^n\) with regular singularities \(\vv{\mathfrak{D}}\)
    induces a solution \(\vv{\omega} = (f_0^* \omega_{\textnormal{FS}}, \ldots, f_{n-1}^* \omega_{\textnormal{FS}})\) to the \(\textnormal{SU}(n+1)\) Toda system
    on \(\mathfrak{X}\) with cone singularities \(\vv{\mathfrak{D}}\). Moreover, any curve \( \varphi \circ f \), where \(\varphi\) is in \(\textnormal{PSU}(n+1)\), produces the same solution as \(f\).

    \item[(2)] \textbf{From solution with cone singularities to curve with regular singularities\,{\rm :}}
    Every solution \(\vv{\omega}\) to the \(\textnormal{SU}(n+1)\) Toda system on \(\mathfrak{X}\) with cone singularities \(\vv{\mathfrak{D}}\)
    corresponds to at least one totally unramified unitary curve \(f : \mathfrak{X} \setminus \mathfrak{S} \to \mathbb{P}^n\) with regular singularities \(\vv{\mathfrak{D}}\) that generates \(\vv{\omega}\). We call $f$ a {\rm curve associated with} $\vv{\omega}$. Furthermore, any other curve that corresponds to \(\vv{\omega}\) will be in the form of \( \varphi \circ f \) for some \(\varphi\) in \(\textnormal{PSU}(n+1)\). 
\end{itemize}
\end{theorem}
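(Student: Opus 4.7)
The plan is to reduce to Lemma \ref{lem:corr}, which handles the non-singular case, and then carry out a local analysis at each puncture in order to match the cone data on the metric side with the regular-singular data on the curve side.

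For part (1), I would start from the given curve $f$ with regular singularities $\vv{\mathfrak{D}}$. Lemma \ref{lem:corr}(1) immediately supplies $\vv{\omega} = (f_0^* \omega_{\rm FS}, \ldots, f_{n-1}^* \omega_{\rm FS})$ as a solution to \eqref{equ:TodaSys} on $\mathfrak{X} \setminus \mathfrak{S}$, and shows this output is unchanged under post-composition by an element of ${\rm PSU}(n+1)$. The substance of part (1) is then local at each puncture $P \in \mathfrak{S}$. Since elements of ${\rm PSU}(n+1)$ act isometrically on every Grassmannian $G(j+1, n+1) \subset \mathbb{P}\left(\Lambda^{j+1}\mathbb{C}^{n+1}\right)$ equipped with its Fubini-Study metric, I may replace $f$ by $\varphi \circ f$ and thereby assume the normal form \eqref{equ:reg_sing}. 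I would then analyze the local lift $F(z) = (z^{\beta_{P,0}} g_0(z), \ldots, z^{\beta_{P,n}} g_n(z))$ and the wedge products $F_{i-1} = F \wedge F' \wedge \cdots \wedge F^{(i-1)}$ in Plücker coordinates. A direct inspection of leading monomials, using the fact that $\beta_{P,0} < \beta_{P,1} < \cdots < \beta_{P,n}$ (guaranteed by \eqref{equ:gamma_to_beta} and $\gamma_{P,j} > -1$), shows that after an appropriate rescaling of the lift one has
\[
\|F_{i-1}\|^2_{\text{rescaled}} = 1 + |z|^{2(\gamma_{P,i}+1)}\, h_i(z,\bar z),
\]
with $h_i$ continuous and strictly positive at $z=0$. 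Applying $\tfrac{\mathrm{i}}{2\pi}\partial\bar\partial\log$ yields $f_{i-1}^* \omega_{\rm FS} = \tfrac{\mathrm{i}}{2}|z|^{2\gamma_{P,i}}\tilde h_i\, \mathrm{d}z \wedge \mathrm{d}\bar z$ near $P$, with $\tilde h_i$ continuous and positive there. This establishes finite local area, cone angle $2\pi(1+\gamma_{P,i})$ for each $\omega_i$ at $P$, and the distributional identity \eqref{equ:TodaSysCone} through the Lelong-Poincaré formula applied to $\log|z|^2$.

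For part (2), given $\vv{\omega}$ with cone singularities $\vv{\mathfrak{D}}$, Lemma \ref{lem:corr}(2) produces a totally unramified unitary curve $f : \mathfrak{X} \setminus \mathfrak{S} \to \mathbb{P}^n$ inducing $\vv{\omega}$ on the complement of $\mathfrak{S}$, unique up to post-composition by an element of ${\rm PSU}(n+1)$. It remains to verify that $f$ has the normal form \eqref{equ:reg_sing} at each $P \in \mathfrak{S}$ with exponents linked to the cone data via \eqref{equ:gamma_to_beta}. For this, I would restrict $\vv{\omega}$ to a small disk chart $(D, z)$ around $P$ and invoke the local classification of finite-area solutions with a single cone singularity on the disk from \cite[Theorem 1.2.(ii)]{MSSX2024}: any such solution is, up to ${\rm PSU}(n+1)$, induced by a unitary curve of exactly the form \eqref{equ:reg_sing}, whose exponents realize the prescribed cone angles through \eqref{equ:gamma_to_beta}. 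Since the global $f$ restricted to $D$ and the local curve from \cite{MSSX2024} both induce $\vv{\omega}|_D$, the Calabi local rigidity theorem (already used in the proof of Lemma \ref{lem:corr}(2)) forces them to agree up to an element of ${\rm PSU}(n+1)$, so $f$ has regular singularities $\vv{\mathfrak{D}}$ at $P$.

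The principal obstacle is the local analysis at each puncture. In part (1), the bookkeeping of leading orders across all the associated curves $f_{i-1}$ requires care: one must identify the correct next-to-leading Plücker coordinate producing the cone term, which reduces to a Vandermonde-type nonvanishing determinant in the $\beta_{P,k}$'s whose validity rests on the strict inequalities $\gamma_{P,j} > -1$. In part (2), essentially all the weight is carried by the local classification in \cite[Theorem 1.2.(ii)]{MSSX2024}, which itself is a delicate asymptotic study of the developing map near a cone point; without that result one would have to reprove it directly from the cone singularity hypothesis, and this I expect to be the most technically demanding step of the whole argument.
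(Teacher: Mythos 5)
Your proposal is correct and follows essentially the same route as the paper: both reduce to Lemma \ref{lem:corr} and then match the cone data with the regular-singular data locally at each puncture by appeal to the results of \cite{MSSX2024} (the infinitesimal Pl\"ucker formula and the local classification on the disk). The only difference is one of detail: the paper delegates the local leading-order Pl\"ucker computation and the disk classification entirely to citations of \cite{MSSX2024}, whereas you sketch that computation explicitly; your order bookkeeping $\sigma_{i-2}+\sigma_i-2\sigma_{i-1}=\gamma_{P,i}$ and the Calabi-rigidity gluing in part (2) are exactly what those citations encapsulate.
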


\begin{proof}
By Lemma~\ref{lem:corr}, it is sufficient to verify the properties concerning the singularities \(\vv{D}\).

\begin{itemize}
    \item[(1)] Consider a totally unramified unitary curve \(f : \mathfrak{X} \setminus \mathfrak{S} \to \mathbb{P}^n\) with regular singularities \(\vv{\mathfrak{D}}\). According to Lemma~\ref{lem:corr}(1), this curve induces a solution \(\vv{\omega}\) on \(\mathfrak{X} \setminus \mathfrak{S}\). Furthermore, based on \cite[Formula 3.]{MSSX2024} and the infinitesimal Pl\" ucker formula, \(\vv{\omega}\) exhibits cone singularities \(\vv{\mathfrak{D}}\).

    \item[(2)] Consider a solution \(\vv{\omega} = (\omega_1, \ldots, \omega_n)\) on \(\mathfrak{X}\) with cone singularities \(\vv{D}\). Lemma~\ref{lem:corr}(2) states that this solution corresponds to a totally unramified unitary curve \(f : \mathfrak{X} \setminus \mathfrak{S} \to \mathbb{P}^n\). Moreover, according to \cite[Theorem 1.2.(i)]{MSSX2024}, curve \(f\) has regular singularities \(\vv{D}\).
\end{itemize}
\end{proof}

\section{Character ensembles and toric curves with regular singularities}

In this section, we shall prove Theorem \ref{thm:sing}. To this end,  we demonstrate in Proposition \ref{prop:ensemble} that character $n$-ensembles correspond to $n$-tuples of meromorphic one-forms with simple poles and purely imaginary periods. These $n$-tuples are {\it non-degenerate} in the sense that they define non-degenerate curves under equation \eqref{equ:ensemble}, as established in Definition \ref{def:nondeg_forms}. 
This explanation renders the concept of a character ensemble much more accessible and relatable.

\subsection{A non-degenerate $n$-tuple of one-forms is a character $n$-ensemble}

\begin{definition}
\label{def:nondeg_forms}
{\rm 
Let \(\vv{\Omega}=(\Omega_1, \ldots, \Omega_n)\) be an \(n\)-tuple of meromorphic one-forms on a compact Riemann surface \(X\), characterized by having simple poles and purely imaginary periods. We define \(\vv{\Omega}\) to be {\it non-degenerate} if there exists a point \(\mathfrak{p}\) in \(X\) such that:
\begin{itemize}
    \item All components of \(\vv{\Omega}\) are holomorphic at \(\mathfrak{p}\).
    \item In a disc chart \((D, |z|<1)\) around \(\mathfrak{p}\), i.e., $z=0$,  the map
    \begin{equation}
    \label{equ:n-tuple}
        \mathfrak{f_p}(z) := \left[1: \exp\left(\int_0^z \Omega_1\right): \ldots : \exp\left(\int_0^z \Omega_n\right)\right]
    \end{equation}
    defines a holomorphic curve from \(D\) to \(\mathbb{P}^n\) that is non-degenerate, meaning that its image is not contained within any hyperplane of \(\mathbb{P}^n\).
\end{itemize}
It is noteworthy that in this context, the curve \(\mathfrak{f_p}(z)\) remains non-degenerate near every point \(\mathfrak{p}\), where all components of \(\vv{\Omega}\) are holomorphic. 
Denote by $\mathcal{P}$ the set of poles of all $\Omega_j$'s. 
Similar to Equation \eqref{equ:ensemble}, \(\vv{\Omega}\) induces a family of non-degenerate, multi-valued holomorphic curves 
\begin{equation}
\label{equ:n-tuple}
\left\{f_{\vv{\Omega}, \vec{\rho}}: \vv{\rho}\in \big(\mathbb{R}_{>0}\big)^n\right\}
\end{equation}
mapping from \(X \setminus \mathcal{P}\) to \(\mathbb{P}^n\). These curves share identical monodromy in \(\mathbb{T}^n\) and possess the same regular singularities on $X$. }
\end{definition} 

\begin{lemma}
\label{lem:n-tuple}
Adopting the context of Definition \ref{def:nondeg_forms} and considering a fixed vector \(\vec{\rho}\) consisting of \(n\) positive numbers, we have the following properties for the curve  \(f_{\vec{\Omega}, \vec{\rho}}\){\rm :}
\begin{enumerate}
    \item A point \(\mathfrak{p}\) on \(X\) is identified as a branch point of the curve if and only if \(\mathfrak{p}\) is a pole of some \(\Omega_j\) where the residue \({\rm Res}_{\mathfrak{p}}\Omega_j\) is not an integer.
    \item The curve is totally unramified on $X$, except at a finite number of regular singularities.
\end{enumerate}
\end{lemma}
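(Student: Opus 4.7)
The plan is to analyze the local form of $f_{\vv{\Omega},\vv{\rho}}$ near an arbitrary point $\mathfrak{p}\in X$ and then match it against the standard form \eqref{equ:reg_sing} from Definition~\ref{def:curve_reg_sing}.

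\textbf{Local setup.} First I would record that every residue of $\Omega_j$ at a pole is real, because $2\pi\mathrm{i}\,\mathrm{Res}_{\mathfrak{p}}\Omega_j$ is a period of $\Omega_j$ and hence purely imaginary. Writing $c_j:=\mathrm{Res}_{\mathfrak{p}}\Omega_j$ (with $c_j=0$ when $\Omega_j$ is holomorphic at $\mathfrak{p}$) and choosing a local coordinate $z$ centered at $\mathfrak{p}$, one obtains $\exp\bigl(\int^z\Omega_j\bigr)=z^{c_j}g_j(z)$ with $g_j$ holomorphic and nonvanishing at $0$, and therefore
\[
f_{\vv{\Omega},\vv{\rho}}(z)=\bigl[1:\rho_1 z^{c_1}g_1(z):\cdots:\rho_n z^{c_n}g_n(z)\bigr].
\]

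\textbf{Part (1).} The key step is to show that the multiset of fractional parts of $\beta_{P,0},\dots,\beta_{P,n}$ in the normal form \eqref{equ:reg_sing} agrees with that of $0,c_1,\dots,c_n$. I would observe that any linear combination $\sum_j a_j z^{c_j}g_j(z)$ has leading exponent of the form $c_{j_0}+m$ for some index $j_0$ and some non-negative integer $m$ produced by Taylor cancellations, so its fractional part equals that of $c_{j_0}$. Applying this to the element $\varphi\in\mathrm{PSU}(n+1)$ that brings $f$ into the normal form \eqref{equ:reg_sing}, and stratifying the local $(n+1)$-dimensional space of germs by the fractional part of the leading-order exponent, a dimension count gives that the multiset of fractional parts is preserved. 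Since $\gamma_{P,j}=\beta_{P,j}-\beta_{P,j-1}-1$, some $\gamma_{P,j}$ is non-integer exactly when not all $\beta_{P,j}$ are integers, which by the above is equivalent to some $c_j$ being non-integer; this gives both directions of (1).

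\textbf{Part (2).} Every regular singularity is either a pole of some $\Omega_j$ or not. Those of the first kind are finite in number since $X$ is compact and each $\Omega_j$ has only finitely many poles. For a point $\mathfrak{p}$ not lying over any pole, all $c_j=0$, the normalized exponents are non-negative integers, and $\mathfrak{p}$ is a regular singularity precisely when the curve degenerates there, i.e.\ when the Wronskian-type determinant formed from $\bigl(1,\rho_1\exp\int\Omega_1,\dots,\rho_n\exp\int\Omega_n\bigr)$ and its iterated derivatives vanishes. This determinant defines a nontrivial meromorphic section of a suitable line bundle on $X$, its nontriviality guaranteed by the non-degeneracy of $\vv{\Omega}$ at the distinguished point of Definition~\ref{def:nondeg_forms}. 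A nontrivial meromorphic section on a compact Riemann surface has only finitely many zeros, giving the desired finiteness.

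\textbf{Main obstacle.} I expect the delicate step to be the multiset-of-fractional-parts argument in part~(1) when several residues $c_j$ coincide: simple permutation of coordinates no longer suffices, and one must track how unitary linear combinations shift leading exponents by positive integers while preserving fractional parts. Once this linear-algebra bookkeeping is in place (together with the existence of a $\mathrm{PSU}(n+1)$-normalization supplied by Theorem~\ref{thm:corr_sing}), the remaining arguments reduce to routine verifications.
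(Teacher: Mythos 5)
Your overall strategy matches the paper's: for (1), write the germ at $\mathfrak{p}$ as $[1:\rho_1 z^{c_1}g_1(z):\cdots:\rho_n z^{c_n}g_n(z)]$ using the reality of the residues, pass to a normal form with strictly increasing exponents, and read off the $\gamma_{\mathfrak{p},j}$ from consecutive differences; the paper does exactly this via the quasi-canonical form of \cite[Theorem 3.1]{MSSX2024}, and your multiset-of-fractional-parts claim is the correct invariant (the local monodromy eigenspace decomposition justifies the ``dimension count'': within each residue class mod $\mathbb{Z}$ of dimension $k_A$ the leading-exponent map attains exactly $k_A$ values, all in class $A$, so a basis with distinct leading exponents must realize the full multiset). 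For (2) the underlying idea is also the paper's -- non-degeneracy forces $\Lambda_n(f)\not\equiv 0$, and its zeros are the ramification points -- but your shortcut ``the Wronskian is a nontrivial meromorphic section of a line bundle on $X$, hence has finitely many zeros'' is not literally valid: at a pole of some $\Omega_j$ with non-integer residue, $\Lambda_n$ has the local form $z^{b}G(z)$ with $b\in\mathbb{R}\setminus\mathbb{Z}$, so it does not extend meromorphically across $\mathcal{P}$, and finiteness of zeros on the non-compact surface $X\setminus\mathcal{P}$ still requires ruling out accumulation of zeros at the points of $\mathcal{P}$. The fix is immediate from data you already have (and is what the paper's case (ii) does, citing \cite[Lemma 4.1]{MSSX2024}): near such a pole $\Lambda_n=z^{\sum b_i-n(n+1)/2}\,G_n(z)\cdot e_0\wedge\cdots\wedge e_n$ with $G_n$ holomorphic and non-vanishing at $0$, so no zeros accumulate there. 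With that one sentence added, your argument is complete and essentially coincides with the paper's, which phrases the same finiteness argument as a proof by contradiction with a convergent sequence of ramification points.
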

\begin{proof}
(1) Take a disc chart $(D,|z|<1)$ around $\mathfrak{p}$ and denote by $\vv{\mathfrak{r}}=\mathfrak{(r_1,\ldots, r_n)}\in \mathbb{R}^n$ the vector of residues of $\vv{\Omega}=\left(\Omega_1,\ldots,\Omega_n\right)$ at $\mathfrak{p}$. We choose a germ $\mathfrak{f_p}$ of 
\(f_{\vec{\Omega}, \vec{\rho}}\) in $D^*:=\{0<|z|<1\}$ with form 
\[\left[1:z^{\mathfrak{r_1}}h_1(z):\ldots:z^{\mathfrak{r_n}}h_n(z)\right]=\left[z^0:z^{\mathfrak{r_1}}h_1(z):\ldots:z^{\mathfrak{r_n}}h_n(z)\right], \]
where each $h_j$ is holomorphic in $D$ and does not vanish at $z=0$.

Assume that the residues $\mathfrak{r}_j$ are all integers. In this case, the curve \(f_{\vec{\Omega}, \vec{\rho}}\) can be extended holomorphically to the point $\mathfrak{p}$. Therefore, at $\mathfrak{p}$, the curve is either totally unramified or it is a ramification point. Specifically, $\mathfrak{p}$ is not a branch point. Furthermore, the ramification indices $\gamma_{\mathfrak{p}, j}$ for $1 \leq j \leq n$ can be calculated using the algorithm described on pages 266-268 in \cite{GH:1994}.

Suppose that at least one of the residues $\mathfrak{r}_j$ is not an integer. Consequently, 
the set $\{0, \mathfrak{r}_1, \cdots, \mathfrak{r}_n\}$ is partitioned into at least two equivalence classes under the modulo $\mathbb{Z}$ equivalence relation.
According to the argument used in the proof in \cite[Theorem 3.1.]{MSSX2024}, there exists a transformation $\varphi \in \mathrm{PSU}(n+1)$ such that $\varphi(\mathfrak{f_p})$ is represented by
\begin{equation}
\label{equ:quasi}
\left[z^{b_0} : z^{b_1} H_1(z) : \ldots : z^{b_n} H_n(z)\right],
\end{equation}
where $b_0 < \ldots < b_n$, and each function $H_j(z)$ is holomorphic in $D$ and non-vanishing at $z=0$. Additionally, there is at least one index $j$ (where $0 \leq j \leq n-1$) for which $b_0 \equiv \ldots \equiv b_j \pmod{\mathbb{Z}}$, and 
$\gamma_{\mathfrak{p}, j+1} = b_{j+1} - b_j - 1\notin \mathbb{Z}$. Therefore, $\mathfrak{p}$ is identified as a branch point of the curve \(f_{\vec{\Omega}, \vec{\rho}}\). Notably, we could  read all $\gamma_{\mathfrak{p}, \cdot}$ indices as
\[\gamma_{\mathfrak{p},j}=b_{j+1}-b_j-1\quad {\rm for\  all}\quad 1\leq j\leq n\]
from \eqref{equ:quasi}, called  the {\it quasi-canonical form} of $f$.

(2)  We aim to show that the function $f_{\vec{\Omega}, \vec{\rho}}$ has finitely many ramification points on a compact Riemann surface $X$. Assuming the contrary, let $\{q_n\}$ be a sequence of distinct ramification points converging to some point $\mathfrak{p}$ on $X$. We consider two cases based on the location of $\mathfrak{p}$:

\begin{itemize}
    \item[(i)] \textbf{Case $\mathfrak{p} \notin \mathcal{P}$:} 
  Select a disc chart $(D, |z| < 1)$ around $\mathfrak{p}$, which includes $\{q_n\}$ and excludes $\mathcal{P}$. Consider a germ $\mathfrak{f}_p$ of $f$ in $D$ and define the $n$-th associated curve $\Lambda_n(\mathfrak{f_p}, z) = \mathfrak{f_p}(z) \wedge \mathfrak{f_p}'(z) \wedge \ldots \wedge \mathfrak{f_p}^{(n)}(z)$, mapping $D$ to $\Lambda^{n+1}(\mathbb{C}^{n+1})$. According to computations in \cite{GH:1994}, $\Lambda_n(\mathfrak{f_p}, z)$ vanishes at $\{q_n\} \cup \{\mathfrak{p}\}$ and thus identically on $D$. This implies that $\Lambda_n(f)$, and consequently $f$, is degenerate on $X \setminus \mathcal{P}$. This is a contradiction since both $\vv{\Omega}$ and $f$ are non-degenerate. 

    \item[(ii)] \textbf{Case $\mathfrak{p} \in \mathcal{P}$:} 
   Using a similar setup with a disc chart $(D, |z|<1)$ around $\mathfrak{p}$ as in (1), we consider the germ $\mathfrak{f_p} = [z^{b_0}H_0(z): \ldots : z^{b_n}H_n(z)]$  where $b_0 < \ldots < b_n$ and each $H_j$ is holomorphic in $D$ and non-vanishing at $z=0$. By Lemma 4.1. in \cite{MSSX2024}, we have
    \begin{align*}
        \Lambda_n(\mathfrak{f_p}, z) &= z^{\sum_{i=0}^n b_i - \frac{n(n+1)}{2}} \cdot G_n\big(b_0, \ldots, b_n; H_0(z), \ldots, H_n(z); z\big) \\
        &\quad \cdot e_0 \wedge \ldots \wedge e_n,
    \end{align*}
    where $G_n$ is holomorphic in $D$ and non-vanishing at $z=0$. The vanishing of $\Lambda_n(\mathfrak{f_p}, z)$ at each $q_n$ and thus identically in $D$ implies that $G_n$ vanishes identically, another contradiction.
\end{itemize}

\end{proof}

Now we reach the key proposition of this subsection.

\begin{proposition}
\label{prop:ensemble}
On a compact Riemann surface \(X\), the concept of a \emph{character \(n\)-ensemble}, as outlined in Definition \ref{def:ensemble}, is equivalent to that of a \emph{non-degenerate \(n\)-tuple of one-forms}, as described in Definition \ref{def:nondeg_forms}.
\end{proposition}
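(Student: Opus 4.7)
The plan is to prove the equivalence by establishing each direction separately; the bulk of the technical work has already been completed in Lemma~\ref{lem:n-tuple}, so both directions reduce to unpacking definitions plus a short propagation argument.

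For the easy direction (character ensemble $\Rightarrow$ non-degenerate $n$-tuple), I would pick any point $\mathfrak{p}$ in $X\setminus\{P_1,\ldots,P_k\}$, which is automatically disjoint from the pole set $\mathcal{P}$ of $\vv{\Omega}$, and observe that the germ $\mathfrak{f}_{\mathfrak{p}}$ in \eqref{equ:n-tuple} is nothing but a particular representative of $f_{\vv{\Omega},\vv{\rho}}$ (up to the rescaling by $\vv{\rho}$). Since the curves $f_{\vv{\Omega},\vv{\rho}}$ are by hypothesis totally unramified unitary curves, they are non-degenerate at $\mathfrak{p}$ by the very definition of ``totally unramified'', so the local curve $\mathfrak{f}_{\mathfrak{p}}$ is non-degenerate there. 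This direction requires no new ideas.

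For the reverse direction (non-degenerate $n$-tuple $\Rightarrow$ character ensemble), I would proceed in three steps. First, check the monodromy: the purely imaginary period condition ensures that along any loop $\gamma$ in $X\setminus\mathcal{P}$ the quantity $\exp\bigl(\int^z\Omega_j\bigr)$ transforms by the unimodular factor $\exp\bigl(\oint_\gamma\Omega_j\bigr)$, placing the monodromy of $f_{\vv{\Omega},\vv{\rho}}$ inside the diagonal torus $\mathbb{T}^n\subset\mathrm{PSU}(n+1)$. Second, propagate non-degeneracy from the distinguished point $\mathfrak{p}$ to all of $X\setminus\mathcal{P}$: the $n$-th associated wedge $\Lambda_n(\mathfrak{f}_{\mathfrak{p}},z)$ is a well-defined multi-valued holomorphic object on $X\setminus\mathcal{P}$, and its non-vanishing at $\mathfrak{p}$, combined with the connectedness of $X\setminus\mathcal{P}$, forces it to be generically nonzero. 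Third, invoke Lemma~\ref{lem:n-tuple}(2) to conclude that the set of ramification points on $X\setminus\mathcal{P}$ is finite; declaring $\{P_1,\ldots,P_k\}:=\mathcal{P}\cup\{\text{these ramification points}\}$ realizes the definition of a character $n$-ensemble on the nose, uniformly in $\vv{\rho}\in(\mathbb{R}_{>0})^n$.

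I expect the main conceptual hurdle to be the finiteness step, i.e.\ ensuring that the ramification locus on $X\setminus\mathcal{P}$ is discrete in $X$; this is exactly where the compactness of $X$ enters, via the Wronskian-type argument already executed in Lemma~\ref{lem:n-tuple}(2). Without that lemma, one would have to rerun the argument by hand: use the non-vanishing of $\Lambda_n$ at $\mathfrak{p}$ to rule out an accumulating sequence of ramification points, treating separately whether the accumulation limit lies in $X\setminus\mathcal{P}$ (where a straightforward identity-principle argument applies) or in $\mathcal{P}$ (where one must first pass to the quasi-canonical form \eqref{equ:quasi} before running the same identity principle on the normalized Wronskian factor). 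Since Lemma~\ref{lem:n-tuple} is already available, however, this step collapses to a one-line citation, and the proof of the proposition amounts essentially to recording the monodromy computation and assembling the singular set.
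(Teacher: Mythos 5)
Your proposal is correct and follows essentially the same route as the paper: the forward direction is immediate from the definitions (the curves $f_{\vv{\Omega},\vv{\rho}}$ are totally unramified unitary curves, hence non-degenerate at any point off the singular set), and the reverse direction rests on Lemma~\ref{lem:n-tuple}(2) to guarantee finitely many regular singularities, exactly as in the paper's proof. Your added details on the torus monodromy and on assembling $\{P_1,\ldots,P_k\}$ as $\mathcal{P}$ together with the ramification locus merely make explicit what the paper leaves implicit in Definition~\ref{def:nondeg_forms}.
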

\begin{proof}
By definition, a character $n$-ensemble is inherently non-degenerate when considered as an $n$-tuple of one-forms. This non-degeneracy is essential for defining character $n$-ensembles. According to the second statement of Lemma~\ref{lem:n-tuple}, any non-degenerate $n$-tuple of one-forms automatically constitutes a character $n$-ensemble. Therefore, the properties required for an $n$-tuple to be a character $n$-ensemble are exactly those that prevent degeneracy among the one-forms in the tuple.
\end{proof}

In the analysis that follows, we categorize three specified $n$-tuples of one-forms on a compact Riemann surface. We identify the nature of each tuple, highlighting whether it is degenerate, i.e.,  form a character $n$-ensemble.

\begin{example}
\label{example:n-tuples}

Let $\mathfrak{C} = \mathfrak{C}_X$ denote the infinite-dimensional real linear space of meromorphic one-forms with simple poles and purely imaginary periods on a compact Riemann surface $X$ {\rm (\cite[$\S$15]{Weyl1964}, \cite[$\S$8-1]{Springer1981} and \cite[$\S$II.4-5]{FK1992})}. Denote by $\mathfrak{C}^* = \mathfrak{C}_X \setminus \{0\}$ the non-zero elements of this space.

(1) It is possible to find two linear independent one-forms $\Omega_1,\,\Omega_2$ in $\mathfrak{C}_X^*$ such that the pair $\vv{\Omega} = (\Omega_1, \Omega_2)$ is degenerate, meaning that $\vv{\Omega}$ does not form a character $2$-ensemble. An example of such forms on the Riemann sphere are $\Omega_1 = \frac{{\rm d}z}{z}$ and $\Omega_2 = \frac{{\rm d}z}{z+1}$. By using \eqref{equ:n-tuple} and setting
    $\mathfrak{p}=1$, we can see that $(\Omega_1,\Omega_2)$ generates a line in $\mathbb{P}^2$. 
    
(2) Consider a one-form \(\Omega \in \mathfrak{C}^*\) and \(n\) nonzero real numbers \(\lambda_1, \ldots, \lambda_n\). The tuple \(\vv{\Omega} = (\lambda_1 \Omega, \ldots, \lambda_n \Omega)\) is non-degenerate if and only if all coefficients \(\lambda_j\) are mutually distinct.
{\rm We can argue as follows. Define \(y=\exp\left(\int_0^z \Omega\right)\). In a small disc chart \((D, |z| < 1)\), where \(\Omega\) is holomorphic and non-vanishing, we consider the \(n\)-th associated curve \(\Lambda_n(f)\) of the curve 
\[f(z) = \left[1: \exp\left(\int_0^z \Omega_1\right): \ldots :\exp\left(\int_0^z \Omega_n\right)\right]=\left[1:y^\lambda_1: \ldots : y^\lambda_n\right].\]
This curve is non-degenerate in \(D\), thereby confirming that $\vv{\Omega}$ is non-degenerate, as shown by the following computation}:
\begin{equation}
\label{equ:Lambda_n}
\begin{split}
\Lambda_n(f) &= f \wedge f' \wedge \ldots \wedge f^{(n)} \\
&= f \wedge \frac{\partial f}{\partial y} \wedge \ldots \wedge \frac{\partial^n f}{\partial y^n} \cdot \left(\frac{{\rm d}y}{{\rm d}z}\right)^{\frac{n(n+1)}{2}} \\
&= \begin{vmatrix}
1 & y^{\lambda_1} & \ldots & y^{\lambda_n} \\
0 & \lambda_1 y^{\lambda_1-1} & \ldots & \lambda_n y^{\lambda_n-1} \\
\vdots & \vdots & \ddots & \vdots \\
0 & \lambda_1 (\lambda_1-1)\ldots (\lambda_1-n+1) y^{\lambda_1-n} & \ldots & \lambda_n (\lambda_n-1)\ldots (\lambda_n-n+1) y^{\lambda_n-n}
\end{vmatrix} \\
&\cdot (y')^{\frac{n(n+1)}{2}} e_0 \wedge e_1 \wedge \ldots \wedge e_n \\
&= \prod_{i=1}^n \lambda_i \prod_{1 \le j < i \le n} (\lambda_i - \lambda_j) \cdot y^{\sum_{i=0}^n \lambda_i - \frac{n(n+1)}{2}} \cdot (y')^{\frac{n(n+1)}{2}}
e_0 \wedge e_1 \wedge \ldots \wedge e_n.
\end{split}
\end{equation}

(3) This example can be generalized as follows: Let $\Omega_1, \ldots, \Omega_n$ be $n$ distinct one-forms in $\mathfrak{C}^*$, and assume there is a point $\mathfrak{p}$ on $X$ where the residues of each $\Omega_j$ at $\mathfrak{p}$ are distinct nonzero numbers. Under these conditions,  $\vv{\Omega} = (\Omega_1, \ldots, \Omega_n)$ forms a non-degenerate $n$-tuple. {\rm The proof is similar to (2)}.

\end{example}

\subsection{Proof of Theorem \ref{thm:sing}} The first statement of the theorem aligns with Lemma~\ref{lem:n-tuple}(1). We proceed to demonstrate the second statement as follows:

Consider a disc chart $(D, |z| < 1)$ centered at a zero $\mathfrak{p}$ of $\vv{\Omega}$. Let $(k_1, \ldots, k_n) \in \mathbb{Z}_{>0}^n$ represent the vector of multiplicities of $\vv{\Omega} = (\Omega_1, \ldots, \Omega_n)$ at $\mathfrak{p}$. We choose a germ $\mathfrak{f_p}$ of $f_{\vec{\Omega}, \vec{\rho}}$ in $D$ as follows:
\[
\left[1 : C_1 + z^{k_1 + 1}h_1(z) : \ldots : C_n + z^{k_n + 1}h_n(z)\right],
\]
where each $h_j(z)$ is a holomorphic function in $D$ that does not vanish at $z = 0$, and each $C_j$ is a nonzero complex number.
Given that each $k_i + 1$ is an integer greater than 1, and using the calculations presented on pages 266-268 in \cite{GH:1994}, we determine that the ramification index of $f = f_0$ at $\mathfrak{p}$ is $\min(k_1, \ldots, k_n) > 0$. Consequently, $\mathfrak{p}$ is confirmed as a ramification point of $f$.

At last, we present an algorithm to identify the regular singularities of the curve 
\(f(z)=\left[1:\exp\left(\int^z \Omega_1\right): \ldots :\exp\left(\int^z \Omega_n\right)\right]\) generated by a character $n$-ensemble \(\vec{\Omega}\). The procedure is as follows:

\begin{enumerate}
    \item Cover the manifold \(X\) using a finite collection of disk charts \((D_i, |z_i| < 1)\) for \(i = 1, \dots, N\). Ensure that each pole in the set \(\mathcal{P}\) is contained within exactly one chart.
    
    \item Within these charts, and excluding the poles in \(\mathcal{P}\), solve the equation \(\Lambda_n(f) = 0\) to identify \(\mathcal{Z}\), the set of all ramification points of \(f\) not in \(\mathcal{P}\).
    
    \item For each point \(\mathfrak{p} \in \mathcal{P} \cup \mathcal{Z}\), select the appropriate chart \((D, |z| < 1)\) around it. Execute the process outlined in the proof of Lemma \ref{lem:n-tuple}(1) to determine the quasi-canonical form of \(f\). This step allows us to ascertain all \(\gamma_{\mathfrak{p}, \cdot}\) indices.
\end{enumerate}


\section{Correspondence between character ensembles and toric solutions}

In this section, we prove Theorem \ref{thm:corr}, which establishes a correspondence between character ensembles and toric solutions with cone singularities
on a compact Riemann surface $X$.

\begin{proof} The first statement of Theorem \ref{thm:corr}  follows from Proposition \ref{prop:ensemble} and Lemma \ref{lem:n-tuple}. 

(2)  Consider a toric solution $\vv{\omega}$ with cone singularities at $\vv{D}$ on a compact Riemann surface $X$. Let $f: X \setminus \{P_1, \ldots, P_k\} \to \mathbb{CP}^n$ be an associated curve derived from this solution. This curve is characterized as a totally unramified unitary curve, and its monodromy is constrained within a maximal torus of the group $\mathrm{PSU}(n+1)$. To specifically align the monodromy of the curve within $\mathbb{T}^n$, a subgroup of the maximal torus, we select an appropriate element $\varphi \in \mathrm{PSU}(n+1)$. Applying $\varphi$, the transformed curve $\varphi \circ f: X \setminus \{P_1, \ldots, P_k\} \to \mathbb{CP}^n$ indeed has its monodromy contained within $\mathbb{T}^n$. We will proceed under the assumption that this simplification applies to the curve $f$.

Choose a sufficiently small disc chart $(D, |z|<1)$ around $\mathfrak{p} \in \{P_1, \ldots, P_k\}$. Using the argument in the proof of \cite[Theorem 3.1.]{MSSX2024}, the restriction of $f = [f_0: \ldots: f_n]$ to $D$ can be expressed as:
\begin{equation}
    f(z) = \left[z^{b_0} \phi_0(z): \ldots : z^{b_n} \phi_n(z)\right]
\end{equation}
where $b_i \in \mathbb{R}$ and $\phi_i$ is a holomorphic function that does not vanish on $D$ for all $1 \leq i \leq n$. 

We define an $n$-tuple $\vec{\Omega} = (\Omega_1, \ldots, \Omega_n)$ by:
\begin{equation}
    \Omega_k = \mathrm{d}\left(\log \frac{f_k}{f_0}\right), \quad k = 1, \ldots, n.
\end{equation}
By computation, we find:
\begin{equation}
    \Omega_k = \left((b_k - b_0) \frac{1}{z} + \frac{\phi'_k(z)}{\phi_k(z)} - \frac{\phi'_0(z)}{\phi_0(z)}\right) \mathrm{d}z
\end{equation}
Hence, each $\Omega_j$ has at most simple poles. Additionally, it can be verified that:
\begin{equation}
    2\Re\, \Omega_k = \mathrm{d} \left(\log \left|\frac{f_k}{f_0}\right|^2\right),
\end{equation}
i.e., the real part of each $\Omega_k$ is exact outside of its poles. Consequently, each $\Omega_k$ is a meromorphic one-form with simple poles and purely imaginary periods. Since $\vec{\Omega}$ is non-degenerate (as $f$ is non-degenerate), it is a character $n$-ensemble on $X$ by Proposition \ref{prop:ensemble}.

\end{proof}

By Theorem 1.5 in \cite{CWWX:2015}, the regular singularities of a toric curve to $\mathbb{P}^1$, which is derived from a character one-form, are confined to the union of the zeros and poles of the one-form. However, the following example demonstrates that a toric curve to $\mathbb{P}^n$, generated from a character \(n\)-ensemble for \(n>1\), can possess ramification points that do not coincide with the zeros or the poles of any ensemble component.

\begin{example}
Consider the toric curve defined by \( f = \left[1: z: z^2: \ldots : z^{n-1}, z^{n+1}\right] \), where \( n > 1 \). \( z=0 \) is a ramification point. The associated character \( n \)-ensemble, \( \vv{\Omega} = (\Omega_1, \ldots, \Omega_n) \), is given by:
\begin{equation*}
    \Omega_k = \frac{k}{z} \, {\rm d}z, \quad \text{for } k=1, 2, \ldots, n-1; \quad \Omega_n = \frac{n+1}{z} \, {\rm d}z
\end{equation*}
Here, \( z=0 \) is a common pole of all components \( \Omega_j \). However, applying a non-degenerate linear transformation to \( f \) results in:
$$
\tilde{f} = \left[1: 1+z: 1+z+z^2: \ldots : 1+z+z^{n-1} : 1+z+z^{n+1}\right],
$$
with the modified character \( n \)-ensemble \( \vv{\widetilde{\Omega}} = (\widetilde{\Omega}_1, \ldots, \widetilde{\Omega}_n) \) represented as:
\begin{equation*}
    \begin{split}
        &\widetilde{\Omega}_1 = \frac{{\rm d}z}{1+z}, \; \widetilde{\Omega}_2 = \frac{1+2z}{1+z+z^2} \, {\rm d}z, \ldots, \\
        &\widetilde{\Omega}_{n-1} = \frac{1+(n-1)z^{n-2}}{1+z+z^{n-1}} \, {\rm d}z, \; \widetilde{\Omega}_n = \frac{1+(n+1)z^n}{1+z+z^{n+1}} \, {\rm d}z
    \end{split}
\end{equation*}
Although \( z=0 \) remains a ramification point for \( \tilde{f} \), it is notably no longer a zero or pole of any new one-forms. 
\end{example}

\section{Two examples}
In this section, we explore the practical application of the correspondence between character ensembles and toric solutions, 
as discussed in the previous section. We conduct a thorough analysis of Examples \ref{exam:Omega} and \ref{exam:RS1}. 
These cases introduce innovative toric solutions to the ${\rm SU}(n+1)$ Toda system with cone singularities. 
Notably, these examples expand upon the recent findings detailed in \cite[Theorems 1.8-9]{LYZ2020}, extending the known boundaries of this research area. 

\subsection{Example \ref{exam:Omega}}
This subsection details Example \ref{exam:Omega}. Utilizing Example \ref{example:n-tuples} (2) and Proposition \ref{prop:ensemble}, we can see that $\Omega$ constitutes a character $n$-ensemble on $X$. This ensemble generates a family of toric curves parametrized by $(\mathbb{R}_{>0})^n$ as per Equation \eqref{equ:ensemble}, all sharing the same regular singularities, denoted by $\vv{D}$. According to Theorem \ref{thm:corr_sing}, these curves are associated with a family of toric solutions that exhibit the same cone singularities $\vv{D}$. Notations from Example \ref{example:n-tuples} (2) are used herein.

By Equation \eqref{equ:Lambda_n}, the singularities of the curve
\[f(z) = \left[1: \exp\left(\int_0^z \lambda_1 \Omega\right): \ldots: \exp\left(\int_0^z \lambda_n \Omega\right)\right] = \left[1: y^{\lambda_1}, \ldots, y^{\lambda_n}\right]\]
are limited to the set of zeros and poles of $\Omega$. Considering a point $\mathfrak{p}$ within this set, we categorize the analysis into the following two cases:

\begin{enumerate}
\item \textbf{$\mathfrak{p}$ is a zero of $\Omega$ with order $k$:} We establish that $\mathfrak{p}$ {\it is a ramification point of the curve $f$ with all ramification indices equal to} $k$. Assuming without loss of generality that $y(\mathfrak{p})=1$, we express $y(z) = 1 + z^{k+1}\phi(z)$ in a small disc chart $(D,|z|<1)$ around $\mathfrak{p}$, where $\phi(z)$ is holomorphic and non-vanishing in $D$. 

Expanding each $y^{\lambda_i}$ around $\mathfrak{p}$ into a power series, we obtain:
\begin{equation*}
    y^{\lambda_i} = 1 + \lambda_i z^{k+1} \varphi(z) + \frac{\lambda_i(\lambda_i-1)}{2} z^{2k+2} \varphi(z)^2 + \ldots, \quad i = 1, 2, \ldots, n.
\end{equation*}
Consequently, $f$ can be expressed as:
\begin{equation*}
\label{equ:infinity}
\begin{split}
    f &= (1, y^{\lambda_1}, y^{\lambda_2}, \ldots, y^{\lambda_n}) \\
    &= \left(1, z^{k+1}\varphi(z), z^{2k+2}\varphi(z)^2, \ldots\right) \cdot
    \begin{pmatrix}
        1 & 1 & \ldots & 1 \\
        0 & \lambda_1 & \ldots & \lambda_n \\
        0 & \frac{\lambda_1(\lambda_1-1)}{2} & \ldots & \frac{\lambda_n(\lambda_n-1)}{2} \\
        \vdots & \vdots & \ddots & \vdots
    \end{pmatrix}
\end{split}
\end{equation*}
Right-multiplying $f$ by an $(n+1) \times (n+1)$ invertible matrix corresponds to applying column transformations to the $\infty \times (n+1)$ infinite matrix on the right-hand side of \eqref{equ:infinity}. The first $n+1$ rows of this matrix:
\begin{equation*}
\begin{pmatrix}
    1 & 1 & \ldots & 1 \\
    0 & \lambda_1 & \ldots & \lambda_n \\
    0 & \frac{\lambda_1(\lambda_1-1)}{2} & \ldots & \frac{\lambda_n(\lambda_n-1)}{2} \\
    \vdots & \vdots & \ddots & \vdots \\
    0 & \frac{\lambda_1(\lambda_1-1)\ldots(\lambda_1-n+1)}{n!} & \ldots & \frac{\lambda_n(\lambda_n-1)\ldots(\lambda_n-n+1)}{n!}
\end{pmatrix}
\end{equation*}
forms an $(n+1) \times (n+1)$ invertible matrix, with a determinant given by:
$$\prod_{i=1}^n i! \prod_{i=1}^n \lambda_i \prod_{1 \le j < i \le n} (\lambda_i - \lambda_j) \neq 0.$$
Thus, it can be transformed into a lower triangular matrix through a finite number of column transformations, such as the Gaussian elimination process. Correspondingly, $f$ has been transformed into the quasi-canonical form:
\begin{equation*}
\label{equ:quasi-can}
    \left(1, z^{k+1}\varphi(z) + \ldots, z^{2k+2}\varphi(z)^2 + \ldots, \ldots \ldots \right)
\end{equation*}
under finitely many transforms in ${\rm PGL}(n+1,\mathbb{C})$. By the definition of ramification indices \cite[pp.\,266-268]{GH:1994} for a holomorphic curve, applying such transformations to the curve does not alter its ramification indices. Therefore, all the $n$ ramification indices of $f$ at $\mathfrak{p}$ coincide with those of the curve \eqref{equ:quasi-can}, each equal to $k$.

\item {\bf $\mathfrak{p}$ is a pole of $\Omega$:} Assume \({\rm Res}_{\mathfrak{p}}(\Omega) = a \in \mathbb{R} \setminus \{0\}\). In a small disk chart \((D, |z|<1)\) around point \(\mathfrak{p}\), the function \(y = \exp\left(\int_0^z \Omega\right)\) is represented as \(z^a \phi(z)\), where \(\phi(z)\) is holomorphic and nonvanishing within \(D\). Accordingly, we define \(f(z) = \left[1: z^{\lambda_1 a} \phi(z)^{\lambda_1}: \ldots :z^{\lambda_n a} \phi(z)^{\lambda_n}\right]\), representing a totally unramified unitary curve in \(D^*\) with a potential regular singularity at \(z=0\) by Equation \eqref{equ:Lambda_n}.

By Theorem \ref{thm:corr_sing}, \(f\) induces a solution \(\vv{\omega}\) to the \({\rm SU}(n+1)\) Toda system on \(D\), characterized by a possible cone singularity at \(z=0\). Theorem \ref{thm:corr_sing} assures that the singularities of both types at \(z=0\) are equivalent. Given the distinct real numbers \(0, \lambda_1 a, \lambda_2 a, \ldots, \lambda_n a\), we reorder them in ascending order to achieve \(\mu_0 < \mu_1 < \ldots < \mu_n\). Concurrently, the components of \(f\) are rearranged in ascending powers of \(z\) to attain the quasi-canonical form:
\begin{equation*}
	\left[z^{\mu_0} \phi(z)^{\frac{\mu_0}{a}}: z^{\mu_1} \phi(z)^{\frac{\mu_1}{a}}: \ldots : z^{\mu_n} \phi(z)^{\frac{\mu_n}{a}}\right].
\end{equation*}
The solution \(\vv{\omega}\), and consequently its cone singularity at \(z=0\), remain unaltered, as this constitutes merely a transformation within \({\rm PSU}(n+1)\). It is established that \(\gamma_{\mathfrak{p}, i} = \mu_i - \mu_{i-1} - 1\) for all \(1 \leq i \leq n\). Depending on the specific \(\lambda\) values, the point \(\mathfrak{p}\) may be classified as a branch point, a ramification point, or a totally unramified point of \(f\).

\end{enumerate}

\subsection{Example \ref{exam:Omega} provides new solutions with cone singularities on a compact Riemann surface}
The curve in this example corresponds to a novel class of solutions to the ${\rm SU}(n+1)$ Toda system with cone singularities on a compact Riemann surface. Lin-Yang-Zhong in \cite[Theorems 1.8-9]{LYZ2020} presented various sufficient conditions for the existence of such solutions on a compact Riemann surface $X$ of genus $g_X>0$. Specifically, by transforming Equation \eqref{equ:TodaSysCone}, it can be reformulated into the equivalent form:
\begin{equation*}
    \Delta_g\tilde{u}_i + \sum_{j=1}^n a_{ij}e^{\tilde{u}_j} - K_0 = 4\pi\sum_{k=1}^m\gamma_{k,i}\delta_{p_k} \quad i=1,2,\ldots,n
\end{equation*}
where $K_0$ is the Gaussian curvature function of the background conformal metric $g$ on $X$ and $\Delta_g$ its Laplacian. The function $\tilde{u}_i$ represents a global transformation of $u_i$ from Equation \eqref{equ:metric_vector}, adjusted by the background metric in each coordinate chart. The singularity coefficients $\{\gamma_{i,j}\}$ remain consistent with those described in \eqref{equ:TodaSysCone}. Lin-Yang-Zhong defined $\rho_i$ for all $1\leq i\leq n$ as follows:
\begin{equation*}
    \rho_i := 4\pi\sum_{k=1}^m \sum_{j=1}^n a^{ij}\gamma_{k,j} + \sum_{j=1}^n a^{ij} \int_X K_0
\end{equation*}
Here, $(a^{ij}) = \left(\frac{j(n+1-i)}{n+1}\right)$ represents the inverse of the Cartan matrix $(a_{ij})$ for $\mathfrak{su}(n+1)$. According to Theorem 1.9 in \cite{LYZ2020}, if $\rho_i \notin \Gamma_i$ for any $i = 1,2,\ldots n$, a solution to the Toda system is feasible. The definition of $\Gamma_i$ $(1\leq i\leq n)$ is detailed in \cite[pp.\,340-341]{LYZ2020}, noting that each includes $4\pi\mathbb{N}$.

In this example, $\Omega$ is a meromorphic one-form on a compact Riemann surface with positive genus, featuring exactly two simple poles (denoted $p_1$ and $p_2$). Assuming $0<\lambda_1<\ldots<\lambda_n$, with ${\rm Res}_{p_1}(\omega) = a \neq 0$, the corresponding $\gamma_{1,j} = (\lambda_j - \lambda_{j-1})a - 1$ for $j=1,2,\ldots,n$ (assuming $\lambda_0 = 0$ for consistency). According to the residue theorem, ${\rm Res}_{p_2}(\omega) = -a$, and as previously noted, $\gamma_{2,j} = (\lambda_{n-j+1} - \lambda_{n-j})a - 1$. Additionally, the other $m-2$ singular points $p_3,\ldots,p_m$ are identified as ramification points, with $\gamma_{i,j} \equiv k_i$ for $i = 3,\ldots,m$ and $j = 1,2,\ldots,n$, where $k_i$ denotes the order of zero of $\Omega$ at $p_i$. After calculations, we derive:
\begin{equation*}
\begin{split}
    \rho_i =& 4\pi\sum_{k=1}^m \sum_{j=1}^n a^{ij}\gamma_{k,j} + \sum_{j=1}^n a^{ij} \int_X K_0\\
    =& 4\pi\Big(\sum_{j=1}^n \frac{j(n+1-i)}{n+1} ((\lambda_j - \lambda_{j-1})a - 1) + \sum_{j=1}^n \frac{j(n+1-i)}{n+1} ((\lambda_{n-j+1} - \lambda_{n-j})a - 1) \Big) \\
    & + 4\pi \sum_{k=3}^m \sum_{j=1}^n \frac{j(n+1-i)}{n+1} k_i + \sum_{j=1}^n \frac{j(n+1-i)}{n+1} \cdot 2\pi(2-2g_X)\\
    =&4\pi(n+1-i)a\lambda_n+4\pi(k_3+\cdots+k_n-1-g_X)\cdot n(n+1-i)    
\end{split}
\end{equation*}
Choosing $a\lambda_n$ to be a positive integer, by this calculation, we can see that all $\rho_i$'s lie within $4\pi \mathbb{N} \subset \Gamma_i$ for all $1\leq i\leq n$, violating the criteria of Theorem 1.9 in \cite{LYZ2020}. Furthermore, Theorem 1.8 from Lin-Yang-Zhong also outlines sufficient conditions for the existence of solutions to the ${\rm SU}(n+1)$ Toda system, requiring all $\{\gamma_{i,j}\}$ to be non-negative integers. By choosing appropriate $\lambda_1,\ldots,\lambda_n$ such that $p_1$ and $p_2$ act as branch points, these conditions are not met. Both theorems stipulate that the genus of $X$ must be positive.

In summary, the solutions of the Toda system associated with the curves in Example \ref{exam:Omega} on a general compact Riemann surface and Example \ref{exam:RS1} on the Riemann sphere introduce new possibilities beyond the scope outlined in \cite{LYZ2020}.


\subsection{Example \ref{exam:RS1}}  We establish the existence of the necessary curves and provide an approximate enumeration of all possible singularity data at $\infty$ for these curves in this example.
\\

\noindent {\bf Existence.} The following lemma is useful for establishing existence.

\begin{lemma}
\label{lem:useful}
Let $ \alpha >0 $. For any polynomial $ P(z) $, there exists a unique polynomial 
$ Q(z) $, having the same degree as $P(z)$, such that
    \begin{equation}
    \label{equ:ode}
        \frac{{\rm d}}{{\rm d}z}\Big(z^{\alpha}Q(z)\Big)=z^{\alpha-1}P(z).
    \end{equation}
\end{lemma}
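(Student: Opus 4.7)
The plan is to reduce the differential equation to a purely algebraic statement on coefficients. Expanding the left-hand side of \eqref{equ:ode} via the product rule gives
\[
\frac{d}{dz}\bigl(z^{\alpha}Q(z)\bigr) = z^{\alpha-1}\bigl(\alpha Q(z) + z Q'(z)\bigr),
\]
so after dividing by $z^{\alpha-1}$ the equation is equivalent to the polynomial identity $\alpha Q(z) + z Q'(z) = P(z)$.

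Next I would compare coefficients. Writing $P(z)=\sum_{k=0}^{n} p_k z^k$ and seeking $Q(z)=\sum_{k=0}^{n} q_k z^k$, observe that the operator $L\colon Q\mapsto \alpha Q + zQ'$ acts diagonally on the monomial basis, sending $z^k$ to $(\alpha+k)z^k$. Matching coefficients therefore reduces the problem to the scalar equations $(\alpha+k)\,q_k = p_k$ for each $k$. Since $\alpha>0$, every factor $\alpha+k$ is strictly positive, hence nonzero, so each $q_k$ is uniquely determined by $q_k = p_k/(\alpha+k)$. Existence and uniqueness of $Q$ follow at once, and the degree equality $\deg Q=\deg P$ is immediate because $q_n\ne 0$ exactly when $p_n\ne 0$.

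There is essentially no obstacle: the only subtle point is the role of the hypothesis $\alpha>0$, which guarantees that the diagonal operator $L$ is invertible on every polynomial space $\mathbb{C}[z]_{\le n}$. If $\alpha$ were a nonpositive integer, the eigenvalue $\alpha+k$ would vanish for $k=-\alpha$, producing a resonance that would obstruct solvability or force logarithmic corrections; it is precisely to avoid this that the hypothesis $\alpha>0$ is imposed. No further compactness, continuity, or analytic argument is needed, and the lemma can be recorded in a few lines.
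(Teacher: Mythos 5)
Your proposal is correct and follows essentially the same route as the paper: the paper also expands $\frac{d}{dz}(z^\alpha Q)=z^{\alpha-1}(\alpha Q+zQ')$ and solves monomial by monomial via $z^k\mapsto(\alpha+k)z^k$, i.e.\ $q_k=p_k/(\alpha+k)$. Your write-up merely makes the diagonal-operator viewpoint and the role of $\alpha>0$ more explicit, which is fine.
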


\begin{proof}
First, consider the monomial $Q(z) = \frac{z^j}{\alpha + j}$ and observe the derivative
\[
\frac{{\rm d}}{{\rm d}z}\Big(z^{\alpha}Q(z)\Big) = z^{\alpha-1}\big(\alpha Q(z) + z Q'(z)\big) = z^{\alpha-1}\cdot z^j.
\]
This calculation allows us to solve the linear ODE \eqref{equ:ode} for any polynomial $P(z)$. The uniqueness of the solution is straightforward to establish.
\end{proof}

\begin{proposition}
\label{prop:existence}
We adopt the notations from Example \ref{exam:RS1} and assume $0 = \beta_0 < \beta_1 < \ldots < \beta_n$ with the increments $\beta_{i} - \beta_{i-1} = 1+\gamma_i$ for all $1 \leq i \leq n$. Consequently, there exist $n$ polynomials $\varphi_1(z), \varphi_2(z), \ldots, \varphi_n(z)$, each non-vanishing at $z=0$, such that the family of curves
    \begin{equation*}
f_{\vv{\rho}}(z) = \left[1 : \rho_1 z^{\beta_1}\varphi_1(z) : \ldots : \rho_n z^{\beta_n}\varphi_n(z)\right], \quad \vv{\rho}=(\rho_1,\ldots,\rho_n) \in \left(\mathbb{R}_{>0}\right)^n,
    \end{equation*}
meets the requirements specified in Example \ref{exam:RS1}.
\end{proposition}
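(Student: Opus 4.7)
My plan is to construct the polynomials $\varphi_1,\ldots,\varphi_n$ by an inductive procedure in which Lemma \ref{lem:useful} is invoked at each step. The starting observation is that
\[
f_k'(z)=(z^{\beta_k}\varphi_k(z))' = z^{\beta_k-1}\bigl(\beta_k\varphi_k(z)+z\varphi_k'(z)\bigr) = z^{\beta_k-1}P_k(z),
\]
where $P_k$ is a polynomial of the same degree as $\varphi_k$. Since $\beta_k>0$, Lemma \ref{lem:useful} gives a one-to-one correspondence $P_k\leftrightarrow\varphi_k$ between polynomials of the same degree, and a direct coefficient computation shows that $\varphi_k(0)\neq 0$ is equivalent to $P_k(0)\neq 0$. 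Hence the problem reduces to choosing appropriate $P_1,\ldots,P_n$, after which the $\varphi_k$'s are recovered by the lemma.

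Before choosing the $P_k$'s, I would reformulate the conditions of Example \ref{exam:RS1} in Wronskian language. Because $f_0=1$, expansion of the full Wronskian along the first column yields
\[
W(f_0,f_1,\ldots,f_n) = W(f_1',\ldots,f_n') = W(z^{\beta_1-1}P_1,\ldots,z^{\beta_n-1}P_n).
\]
The branch structure at $z=0$ with indices $(\gamma_1,\ldots,\gamma_n)$ is already built into the form $[1:\rho_1 z^{\beta_1}\varphi_1:\cdots:\rho_n z^{\beta_n}\varphi_n]$, provided each $\varphi_k(0)\neq 0$. The requirement that $z_i$ is a ramification point with indices $(\gamma_{i,1},\ldots,\gamma_{i,n})$ translates, via the standard vanishing-order computation for osculating flags, into $W(f_1',\ldots,f_n')$ having vanishing order exactly $N_i=n\gamma_{i,1}+(n-1)\gamma_{i,2}+\cdots+\gamma_{i,n}$ at $z_i$; and total unramifiedness on $\mathbb{C}\setminus\{0,z_1,\ldots,z_m\}$ is equivalent to this Wronskian being non-vanishing there.

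Next, I would build the $P_k$'s inductively. At step $k$, I select $P_k$ to be a polynomial of a carefully chosen degree whose zeros on $\mathbb{C}\setminus\{0\}$ include each $z_i$ to order $\gamma_{i,k}$ (the increment that feeds the prescribed ramification at the successive associated curves), with $P_k(0)\neq 0$. By Lemma \ref{lem:useful}, $\varphi_k$ is then the unique polynomial of the same degree as $P_k$ satisfying $(z^{\beta_k}\varphi_k)'=z^{\beta_k-1}P_k$, and it automatically satisfies $\varphi_k(0)\neq 0$. Once all $\varphi_k$'s are built, the curve $f_{\vv{\rho}}$ is toric and yields the desired character $n$-ensemble structure via Proposition \ref{prop:ensemble} and the constructions of Section 4.

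The main obstacle I anticipate is verifying that $W(f_1',\ldots,f_n')$ vanishes to the exact order $N_i$ at each $z_i$ and to no positive order anywhere else in $\mathbb{C}\setminus\{0\}$. Pointwise vanishing of each $P_k$ alone does not preclude accidental higher-order cancellations or extraneous zeros arising from the Wronskian interactions among the $P_k$'s, which would show up as spurious ramification points. I expect this to require an inductive analysis of the partial Wronskians $W_k=W(f_0,\ldots,f_k)$, coupled with a delicate degree count that relates the degree of each $P_k$ to the prescribed ramification data at $z_1,\ldots,z_m$ and implicitly determines the (as-yet unspecified) branch structure at $\infty$. Non-degeneracy of the resulting curve, which is needed for $\vv{\Omega}=(\mathrm{d}\log f_1,\ldots,\mathrm{d}\log f_n)$ to be a genuine character $n$-ensemble in the sense of Proposition \ref{prop:ensemble}, will then follow from the strict inequalities $\beta_1<\cdots<\beta_n$ together with the fact that each $\varphi_k$ is non-trivial.
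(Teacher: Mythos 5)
Your reduction via Lemma \ref{lem:useful} and the reformulation in terms of the Wronskian $W(f_1',\ldots,f_n')$ are both sound and in the spirit of the paper's argument. However, there is a genuine error in the core of your construction: you prescribe that $P_k$ vanishes at $z_i$ to order $\gamma_{i,k}$, component by component. This does not produce the prescribed ramification data. The index $\gamma_{i,1}$ is the ramification of the curve $f$ itself at $z_i$, which requires \emph{every} component of $f'=(0,f_1',\ldots,f_n')$ to vanish at $z_i$ to order exactly $\gamma_{i,1}$; the index $\gamma_{i,2}$ is then a condition on the \emph{derived} curve obtained after factoring out $(z-z_i)^{\gamma_{i,1}}$, and so on. Concretely, take $n=2$ with $\gamma_{i,1}=1$, $\gamma_{i,2}=0$: your recipe makes $f_1'$ vanish to order $1$ at $z_i$ but leaves $f_2'(z_i)\neq 0$, so $f$ is an immersion at $z_i$ and the first ramification index is $0$, not $1$. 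Likewise, for $\gamma_{i,1}=\gamma_{i,2}=1$ the required vanishing order of the top Wronskian is $N_i=2\gamma_{i,1}+\gamma_{i,2}=3$, whereas $W(f_1',f_2')$ with each $f_k'$ vanishing to order $1$ generically vanishes only to order $1$. The weights $n,n-1,\ldots,1$ in $N_i$ arise precisely because the factor $\prod_i(z-z_i)^{\gamma_{i,1}}$ must be common to all $n$ entries of $f'$, the factor $\prod_i(z-z_i)^{\gamma_{i,2}}$ common to all $n-1$ entries of the first derived vector, etc.

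This is why the paper's proof is organized as a \emph{nested} hierarchy rather than a single pass: it sets $\psi_k=\prod_i(z-z_i)^{\gamma_{i,1}}\varphi_k^{(1)}(z)$ for all $k$ (so the full first-derivative vector carries the common factor governing $\gamma_{i,1}$), then applies Lemma \ref{lem:useful} again to the normalized vector $\vv{g_1}$ to force its derivative to carry the common factor $\prod_i(z-z_i)^{\gamma_{i,2}}$, and iterates $n$ times, producing the auxiliary polynomials $\varphi_k^{(j)}$ at every level. The explicit factorization $\Lambda_k(\hat f)=z^{\,\sum_{i\le k}\beta_i-k(k+1)/2}\prod_i(z-z_i)^{k\gamma_{i,1}+\cdots+\gamma_{i,k}}\,\hat f\wedge\vv{g_1}\wedge\cdots\wedge\vv{g_k}$, whose leading term in the wedge is $e_0\wedge\cdots\wedge e_k$, is what rules out the spurious zeros and accidental cancellations that you flag as your ``main obstacle'' but leave unresolved. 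To repair your argument you would need to replace the componentwise prescription $\mathrm{ord}_{z_i}P_k=\gamma_{i,k}$ by this telescoping one and carry out the same multi-level application of Lemma \ref{lem:useful}.
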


\begin{proof} We shall construct these $n$ polynomials $\varphi_1(z), \varphi_2(z), \ldots, \varphi_n(z)$ by using
the iteration argument.

{\it Step 1.} Take 
\begin{equation*}
\begin{split}
    &\psi_1(z) := \prod_{i=1}^m (z - z_i)^{\gamma_{i,1}}, \\
    &\psi_k(z) := \prod_{i=1}^m (z - z_i)^{\gamma_{i,1}} \cdot \varphi_k^{(1)}(z), \quad 2 \leq k \leq n,
\end{split}
\end{equation*}
where $\varphi_2^{(1)}(z), \ldots, \varphi_n^{(1)}(z)$ are $(n-1)$ polynomials that will be chosen later. Given that $0 < \beta_1 < \ldots < \beta_n$, Lemma \ref{lem:useful} ensures the existence of $n$ polynomials $\varphi_1(z), \ldots, \varphi_n(z)$ such that the derivative of $\hat{f}(z) := \left(1, z^{\beta_1} \varphi_1(z), z^{\beta_2} \varphi_2(z), \ldots, z^{\beta_n} \varphi_n(z)\right)$ is given by
\begin{equation}
\label{equ:d_hatf}
    \frac{{\rm d}\hat{f}(z)}{{\rm d}z} = \left(0, z^{\beta_1-1} \psi_1(z), \ldots, z^{\beta_n-1} \psi_n(z)\right) =: z^{\beta_1-1} \prod_{i=1}^m (z - z_i)^{\gamma_{i,1}} \vv{g_1}(z),
\end{equation}
where $\vv{g_1}(z) = \left(0, 1, z^{\beta_2-\beta_1} \varphi_2^{(1)}(z), \ldots, z^{\beta_n-\beta_1} \varphi_n^{(1)}(z)\right)$. Moreover, we could determine the polynomial $\varphi_1(z)$ of degree $\sum_{\ell=1}^m\,\gamma_{\ell, 1}$, which does not vanish at $z=0$.

{\it Step 2.} Differentiating $\frac{{\rm d}\hat{f}(z)}{{\rm d}z}$ again, we obtain
\begin{equation}
\label{equ:f''}
    \frac{{\rm d}^2 \hat{f}(z)}{{\rm d}z^2} =
    \frac{{\rm d}}{{\rm d}z}\left(z^{\beta_1-1} \prod_{i=1}^m (z - z_i)^{\gamma_{i,1}}\right) \cdot \vv{g_1}(z) + z^{\beta_1-1} \prod_{i=1}^m (z - z_i)^{\gamma_{i,1}} \cdot \frac{{\rm d}\vv{g_1}(z)}{{\rm d}z}.
\end{equation}
For any $(n-2)$ polynomials $\varphi_3^{(2)}(z), \ldots, \varphi_n^{(2)}(z)$ that will be chosen later, Lemma \ref{lem:useful} ensures there exist $(n-1)$ polynomials $\varphi_2^{(1)}(z), \ldots, \varphi_n^{(1)}(z)$ such that
\begin{equation}
\label{equ:dg_1}
\begin{split}
    \frac{{\rm d}\vv{g_1}}{{\rm d}z} &= \frac{\rm d}{{\rm d}z}\left(0, 1, z^{\beta_2-\beta_1} \varphi_2^{(1)}(z), \ldots, z^{\beta_n-\beta_1} \varphi_n^{(1)}(z)\right) \\
    &= z^{\beta_2-\beta_1-1} \prod_{i=1}^m (z - z_i)^{\gamma_{i,2}} \cdot \Big(0, 0, 1, z^{\beta_3-\beta_2} \varphi_3^{(2)}(z), \ldots, z^{\beta_n-\beta_2} \varphi_n^{(2)}(z)\Big) \\
    &:= z^{\beta_2-\beta_1-1} \prod_{i=1}^m (z - z_i)^{\gamma_{i,2}} \cdot \vv{g_2}(z),
\end{split}
\end{equation}
where $\vv{g_2}(z) = \left(0, 0, 1, z^{\beta_3-\beta_2} \varphi_3^{(2)}(z), \ldots, z^{\beta_n-\beta_2} \varphi_n^{(2)}(z)\right)$. Moreover, by Equations \eqref{equ:dg_1} and \eqref{equ:d_hatf}, 
we could determine these three polynomials $\varphi_2^{(1)}(z)$, $\psi_2(z)$ and $\varphi_2(z)$ such that $$\deg\, \varphi_2=\sum_{\ell=1}^m \left(\gamma_{\ell, 1}+\gamma_{\ell,2}\right)$$
and all of them do not vanish at $z=0$. 
Substituting this equation into \eqref{equ:f''} yields
\begin{equation}
    \frac{{\rm d}^2 \hat{f}(z)}{{\rm d}z^2} =
    \left(z^{\beta_1-1} \prod_{i=1}^m (z - z_i)^{\gamma_{i,1}}\right)' \cdot \vv{g_1}(z) + z^{\beta_2-2} \prod_{i=1}^m (z - z_i)^{\gamma_{i,1} + \gamma_{i,2}} \cdot \vv{g_2}(z).
\end{equation}
Repeating the above process, we continue to differentiate $\frac{{\rm d}^2 \hat{f}(z)}{{\rm d}z^2}$ and $\vv{g_2}$ and obtain 
\begin{gather*}
    \vec{g}_3 = \Big(0, 0, 0, 1, z^{\beta_4-\beta_3} \varphi_4^{(3)}(z), \ldots, z^{\beta_n-\beta_3} \varphi_n^{(3)}(z)\Big) \nonumber \\
    \vdots \nonumber \\
    \vec{g}_n \equiv \Big(0, 0, \ldots, 0, 1\Big)
\end{gather*}
In summary, for each increment in the subscript $i$ of $\vv{g_i}$, the vector-valued function $\vv{g_i}$ exhibits one additional vanishing component in its earlier entries. After carrying out this procedure $n$ times, we find all the $n$ polynomials $\varphi_1(z),\ldots, \varphi_n(z)$ such that 
\begin{equation}
\label{equ:deg}  
\deg\, \varphi_k=\sum_{j=1}^k\sum_{\ell=1}^m\, \gamma_{\ell,j},\quad 1\leq k\leq n,
\end{equation}
and all of them do not vanish at $z=0$.

{\it Step 4.}  In the final step, we show that {\it the curve 
$$f(z):=[\hat{f}(z)]= \left[1 : z^{\beta_1}\varphi_1(z) : \ldots :  z^{\beta_n}\varphi_n(z)\right]$$
satisfies the requirements specified in Example \ref{exam:RS1}}. Since  $\hat{f}(z)$ itself is of quasi-canonical form near $z=0$, it has the desired singularity information there, i.e., $\gamma_{[0],j}=\gamma_j=\beta_j-\beta_{j-1}-1$ for all $1\leq j\leq n$. 
By the induction argument, we obtain 
\begin{equation}\begin{split}
		&\hat{f}^{(k)}(z)-z^{\beta_k-k}\prod\limits_{i=1}^m(z-z_i)^{\gamma_{i,1}+\ldots+\gamma_{i,k}}\cdot \vv{g_k}(z) \\
		&=a_0^{(k)}\hat{f}(z)+ a_1^{(k)}\hat{f}'(z)+\ldots+a_{k-1}^{(k)}\hat{f}^{(k-1)}(z),\quad 1\leq k\leq n,
\end{split}\end{equation}
where all $a_i^{(j)}$ are multi-valued holomorphic functions. Then we obtain the following equation for each $1\leq k\leq n$,
\begin{equation}
\label{equ:Lambda_k}
\begin{split}
		\Lambda_k(\hat{f})&=\hat{f}\wedge\hat{f}'\wedge\ldots\wedge\hat{f}^{(k)} \\
		&=\hat{f}\wedge z^{\beta_1-1}\prod\limits_{i=1}^m(z-z_i)^{\gamma_{i,1}}\cdot \vv{g_1} \wedge \ldots \wedge z^{\beta_k-k}\prod\limits_{i=1}^m(z-z_i)^{\gamma_{i,1}+\ldots+\gamma_{i,k}}\cdot \vv{g_k} \\
  &=z^{\sum\limits_{i=1}^k\beta_i-\frac{k(k+1)}{2}}\prod\limits_{i=1}^m(z-z_i)^{k\gamma_{i,1}+(k-1)\gamma_{i,2}+\ldots+\gamma_{i,k}}
  \hat{f}\wedge \vv{g_1}\wedge\ldots\wedge \vv{g_k}\\
		&=z^{\sum\limits_{i=1}^k\beta_i-\frac{k(k+1)}{2}}\prod\limits_{i=1}^m(z-z_i)^{k\gamma_{i,1}+(k-1)\gamma_{i,2}+\ldots+\gamma_{i,k}} \cdot \\
		&\begin{pmatrix}1 \\ z^{\beta_1}\varphi_1(z) \\z^{\beta_2}\varphi_2(z) \\ \vdots \\ z^{\beta_n}\varphi_n(z)\end{pmatrix}^{\rm T} \wedge \begin{pmatrix}0 \\ 1 \\ z^{\beta_2-\beta_1}\varphi_2^{(1)}(z) \\ \vdots \\ z^{\beta_n-\beta_1}\varphi_n^{(1)}(z)\end{pmatrix}^{\rm T} \wedge \ldots \wedge \begin{pmatrix}0 \\ \vdots \\ 0 \\ 1 \\ z^{\beta_{k+1}-\beta_k}\varphi_{k+1}^{(k)}(z) \\ \vdots \\ z^{\beta_n-\beta_k}\varphi_n^{(k)}(z)\end{pmatrix}^{\rm T}.
\end{split}\end{equation}
Notably, the first term of $\hat{f} \wedge \vv{g_1} \wedge \ldots \wedge \vv{g_k}$ equals $e_0 \wedge \ldots \wedge e_k$. From this relationship, $\Lambda_n(\hat{f})$ remains non-zero outside the set $\{0, \infty, z_1, \ldots, z_m\}$, where the curve $f$ is totally unramified. Moreover, for the non-degenerate unitary curve 
$f : \mathbb{C} \setminus \{0\} \to \mathbb{P}^n$,
$\Lambda_n(\hat{f})$ vanishes at $z_1, \ldots, z_m$, which constitute all the ramification points of $f$.

For each $1 \leq \ell \leq m$, by applying a linear transformation in $\mathrm{GL}(n+1, \mathbb{C})$ to $\hat{f}$, we can achieve the quasi-canonical form of $\hat{f}$ near its ramification point $z_k$. This form is given by
\begin{eqnarray*}
\hat{f}(z) = \biggl(1, (z - z_\ell)^{\gamma_{[z_\ell], 1} + 1} g_1(z),
(z - z_\ell)^{\gamma_{[z_\ell], 1} + \gamma_{[z_\ell], 2} + 2} g_2(z),\\ \ldots, 
(z - z_\ell)^{\gamma_{[z_\ell], 1} + \ldots + \gamma_{[z_\ell], n} + n} g_n(z) \biggr),
\end{eqnarray*}
where $g_1, \ldots, g_n$ are multi-valued holomorphic functions that do not vanish at $z_\ell$. 
By straightforward computation, for each $1 \leq k \leq n$, the $k$-th associated curve $\Lambda_k(\hat{f})$ is expressed as 
\begin{equation}
\label{equ:Lambda_k_2}
	\Lambda_k(\hat{f}) = \prod_{i=1}^m (z - z_\ell)^{k \gamma_{[z_\ell], 1} + (k - 1) \gamma_{[z_\ell], 2} + \ldots + \gamma_{[z_\ell], k}} \cdot \vv{\lambda_k}(z)
\end{equation}
where $\vv{\lambda_k}(z)$ is a multi-valued holomorphic curve valued in $\Lambda^{k+1}(\mathbb{C}^{n+1})$ and does not vanish at $z_\ell$. 
By using this equation and \cite[Lemma 4.1.]{MSSX2024},   we obtain 
\begin{equation}
\label{equ:Lambda_n_2}
\Lambda_n(\hat{f}) = {\rm Const}\cdot z^{\sum_{i=1}^n\,\beta_i-\frac{n(n+1)}{2}}\prod_{\ell=1}^m (z - z_\ell)^{n \gamma_{[z_\ell], 1} + (n - 1) \gamma_{[z_\ell], 2} + \ldots + \gamma_{[z_\ell], n}}\cdot e_0\wedge\ldots\wedge e_n.
\end{equation}
Comparing this with Equation \eqref{equ:Lambda_k} yields the following system of linear equations:
\[
k \gamma_{[z_\ell], 1} + (k - 1) \gamma_{[z_\ell], 2} + \ldots + \gamma_{[z_\ell], k} = k \gamma_{\ell, 1} + (k - 1) \gamma_{\ell, 2} + \ldots + \gamma_{\ell, k}, \quad 1 \leq k \leq n.
\]
Consequently, we find that $\gamma_{[z_\ell], k} = \gamma_{\ell, k}$ for all $1 \leq \ell \leq m$ and $1 \leq k \leq n$.

\end{proof}

\begin{remark}
\label{rem:RS}
The singularity information at $\infty$ of the curve 
\[
\hat{f}(z) = \left(z^{\beta_0} \varphi_0(z), z^{\beta_1} \varphi_1(z), \ldots, z^{\beta_n} \varphi_n(z)\right) \quad \text{with} \quad \beta_0 = 1 \quad \text{and} \quad \varphi_0(z) \equiv 1,
\]
constructed in the proof is given by
\[
(\gamma_{[\infty], i})_{i=1}^n = \left(\beta_n - \beta_{n-1} - 1 + \sum_{i=1}^m \gamma_{i,n}, \ldots, \beta_2 - \beta_1 - 1 + \sum_{i=1}^m \gamma_{i,2}, \beta_1 - 1 + \sum_{i=1}^m \gamma_{i,1}\right).
\]
{\rm Indeed, substituting $z = \frac{1}{w}$ into $\hat{f}(z)$, we can obtain its local expression near $\infty$ as
\begin{equation}
\hat{f}(w) = \left(w^{-\beta_0 - \deg \varphi_0} \Phi_0(w), w^{-\beta_1 - \deg \varphi_1} \Phi_1(w), \ldots, w^{-\beta_n - \deg \varphi_n} \Phi_n(w)\right),
\end{equation}
where $\Phi_0(w), \ldots, \Phi_n(w)$ are polynomials non-vanishing at $w=0$.
Since $\beta_0 < \ldots < \beta_n$ and $\deg \varphi_0 < \ldots < \deg \varphi_n$, we obtain
the quasi-canonical form of $\hat{f}(w)$ near $w=0$ as
\[
\left(w^{-\beta_n - \deg \varphi_n} \Phi_n(w), \ldots, w^{-\beta_1 - \deg \varphi_1} \Phi_1(w), \ldots, w^{-\beta_0 - \deg \varphi_0} \Phi_0(w)\right).
\]
The statement follows from Equation \eqref{equ:deg}.}
\end{remark}

\noindent {\bf Counting.} We shall enumerate the possibilities of $(\gamma_{[\infty],i})_{i=1}^\infty$ for the curves in  Example \ref{exam:RS1}. To this end, we need the following lemma.

\begin{lemma}
\label{lem:distinct} Consider $\beta_0<\ldots<\beta_n$, $(n+1)$ polynomials $\varphi_0(z), \varphi_1, \ldots, \varphi_n(z)$ and 
	the following unitary curve on $\mathbb{C}\setminus\{0\}$
	\begin{equation*}
		\hat{f}(z)=\left(z^{\beta_0}\varphi_0(z),z^{\beta_1}\varphi_1(z),\ldots,z^{\beta_n}\varphi_n(z)\right).
	\end{equation*}
	Then by applying a suitable non-degenerate linear transformation to $\hat{f}(z)$, we can obtain
	\begin{equation}
		\hat{g}(z)= \big(z^{\beta_0}\psi_0(z), z^{\beta_1}\psi_1(z),\ldots,z^{\beta_n}\psi_n(z)\big)
	\end{equation}
	where $ \psi_0(z),\ldots,\psi_n(z) $ are polynomials such that $ \beta_0+\deg\psi_0,\beta_1+\deg\psi_1,\ldots,\beta_n+\deg\psi_n $ are mutually distinct. Moreover,  these two curves share the same 
regular singularities on the Riemann sphere $\mathbb{C}\cup \infty$.
\end{lemma}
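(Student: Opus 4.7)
The plan is to construct the required invertible transformation as a finite composition of elementary operations, each strictly decreasing the degree of a single polynomial component while preserving the structural form $z^{\beta_i}(\text{polynomial})$ of every component. Assign to each component the \emph{weight} $w_k := \beta_k + \deg\varphi_k$. The key rigidity observation is this: if $w_j = w_k$ with $j < k$, then from $\beta_k - \beta_j = \deg\varphi_j - \deg\varphi_k$ we conclude that $\beta_k - \beta_j$ is a positive integer, and in particular $z^{\beta_k - \beta_j}\varphi_k(z)$ is a genuine polynomial.

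Writing $a_j, a_k$ for the leading coefficients of $\varphi_j, \varphi_k$, the elementary reduction step is the substitution
\[
\varphi_j \longmapsto \tilde{\varphi}_j := \varphi_j - \tfrac{a_j}{a_k}\, z^{\beta_k - \beta_j}\varphi_k,
\]
leaving all other $\varphi_i$'s unchanged. The resulting $\tilde\varphi_j$ is a polynomial whose leading term is cancelled, so $\deg\tilde\varphi_j < \deg\varphi_j$ and $\tilde w_j < w_j$. Globally this corresponds to replacing the $j$-th entry $z^{\beta_j}\varphi_j$ by $z^{\beta_j}\varphi_j - (a_j/a_k)z^{\beta_k}\varphi_k$, which is an elementary invertible linear transformation of $\hat f$. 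Iterating the reduction, the nonnegative integer $\sum_k \deg\varphi_k$ strictly decreases at each step, and the non-degeneracy of the unitary curve $\hat f$, preserved throughout since the composed transformation stays invertible, prevents any component from becoming identically zero. The procedure therefore terminates in finitely many steps, producing the desired $\hat g$ with mutually distinct weights $\beta_k + \deg\psi_k$.

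For the second assertion, $\hat g = A\hat f$ for some $A \in \mathrm{GL}(n+1, \mathbb{C})$, so the projective curves $[\hat f]$ and $[\hat g]$ are related by an element of $\mathrm{PGL}(n+1, \mathbb{C})$ acting on $\mathbb{P}^n$. The quasi-canonical form in Definition \ref{def:curve_reg_sing} is already a normal form under $\mathrm{PGL}(n+1, \mathbb{C})$-equivalence at each point on $\mathbb{C}\cup\{\infty\}$, so the tuple $(\gamma_{P, j})$ is an intrinsic invariant of the projective curve germ at $P$ and is preserved by the transformation. Hence $\hat f$ and $\hat g$ share the same regular singularities on the Riemann sphere.

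The principal obstacle is the opening commensurability observation, which is precisely what makes the elementary reduction legal; the remainder is a routine Gaussian-type elimination bounded by a descent on $\sum_k \deg\varphi_k$.
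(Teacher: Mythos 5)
Your proof is correct and follows essentially the same route as the paper: a Gaussian-type elimination on the quasi-degrees $\beta_k+\deg\varphi_k$, reducing the lower-indexed component by a multiple of the higher-indexed one whenever two quasi-degrees collide, followed by the observation that regular singularities are invariant under non-degenerate linear transformations. Your explicit justification that $\beta_k-\beta_j=\deg\varphi_j-\deg\varphi_k$ is a positive integer (so the reduction preserves the form $z^{\beta_j}\cdot(\text{polynomial})$), and your termination argument via descent on $\sum_k\deg\varphi_k$, are slightly more careful versions of steps the paper treats implicitly.
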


\begin{proof}
For simplicity, we define the term “quasi-degree” for the expressions $\beta_0 + \deg \varphi_0, \beta_1 + \deg \varphi_1, \ldots, \beta_n + \deg \varphi_n$. These quasi-degrees correspond to the quasi-polynomials $z^{\beta_0}\varphi_0(z), z^{\beta_1}\varphi_1(z), \ldots, z^{\beta_n}\varphi_n(z)$, where each component can include non-integer powers of $z$.

Should any two quasi-degrees be equal, such that $\beta_i + \deg \varphi_i = \beta_j + \deg \varphi_j$ for $i < j$, we can alter $z^{\beta_i}\varphi_i(z)$ by adding a suitable multiple of $z^{\beta_j}\varphi_j(z)$. This operation transforms $z^{\beta_i}\varphi_i(z)$ into $z^{\beta_i}\tilde{\varphi}_i(z)$, where $\deg \tilde{\varphi}_i(z) < \deg \varphi_i(z)$ and $\tilde{\varphi}_i(0) \neq 0$. Importantly, the terms $z^{\beta_i}$ and $z^{\beta_j}$ remain unchanged.

Consider the scenario where $z^{\beta_m}\varphi_m(z)$ has the highest quasi-degree among all components, and it has the largest index among those with this quasi-degree. Beginning with this component, we use the aforementioned transformation to adjust other components with the same quasi-degree, reducing their degrees. This ensures that $\beta_m + \deg \varphi_m$ stands out as the unique highest quasi-degree. Subsequently, we identify the next highest quasi-degree among the remaining components and apply similar transformations. After a finite series of these steps, the quasi-degrees of all components become distinct. This results in the transformed curve maintaining its form as $\hat{g}(z) = \big(z^{\beta_0}\psi_0(z), z^{\beta_1}\psi_1(z), \ldots, z^{\beta_n}\psi_n(z)\big)$. 

Since curve $\hat{g}$ differs from curve $\hat{f}$ only by a non-degenerate linear transformation, they share the same regular singularities on the Riemann sphere.

\end{proof}

By Lemma~\ref{lem:distinct}, we assume that for each curve
\[
\hat{f}(z) = \left(z^{\beta_0} \varphi_0(z), z^{\beta_1} \varphi_1(z), \ldots, z^{\beta_n} \varphi_n(z)\right)
\]
in Example~\ref{exam:RS1}, the $(n+1)$ positive numbers $\beta_0 + \deg \varphi_0, \ldots, \beta_n + \deg \varphi_n$ are mutually distinct. Substituting $z = \frac{1}{w}$ into $\hat{f}$, we obtain its local expression near $z=\infty$, i.e., $w=0$,  as
\begin{equation}
    \hat{f}(w) = \left(w^{-\beta_0 - \deg \varphi_0} \Phi_0(w), w^{-\beta_1 - \deg \varphi_1} \Phi_1(w), \ldots, w^{-\beta_n - \deg \varphi_n} \Phi_n(w)\right)
\end{equation}
where $\Phi_0, \Phi_1, \ldots, \Phi_n$ are polynomials non-vanishing at $w=0$. Since $\beta_0 + \deg \varphi_0, \beta_1 + \deg \varphi_1, \ldots, \beta_n + \deg \varphi_n$ are pairwise distinct, we can rearrange the components in equation (4.41) according to the ascending order of the powers of $w$. This arrangement yields the quasi-canonical form of $\hat{f}(w)$ near $w=0$, which determines the sequence $(\gamma_{[\infty], i})_{i=1}^\infty$. The challenge then reduces to determining how many distinct degree vectors 
$\vv{d} = (\deg \varphi_0, \deg \varphi_1, \dots, \deg \varphi_n)$ are possible. The subsequent lemma is critical for this counting.

\begin{lemma} 
\label{lem:varphi}
$\displaystyle{\sum_{j=0}^n\,\deg\, \varphi_j=\sum\limits_{i=1}^m\Big(n\gamma_{i,1}+(n-1)\gamma_{i,2}+\ldots+\gamma_{i,n}\Big)}$.
\end{lemma}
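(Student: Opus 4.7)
The plan is to equate two expressions for the top $n$-th associated curve $\Lambda_n(\hat{f})$ of
$$\hat{f}(z) = \big(z^{\beta_0}\varphi_0(z), z^{\beta_1}\varphi_1(z), \ldots, z^{\beta_n}\varphi_n(z)\big),$$
and then compare the degrees of their polynomial parts. The first expression comes from viewing $\Lambda_n(\hat{f})$ as a Wronskian-type determinant, while the second is already available from the proof of Proposition~\ref{prop:existence}.

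First I would invoke the general Pl\"ucker--Wronskian factorization (cf.\ \cite[Lemma 4.1]{MSSX2024}) to write
$$\Lambda_n(\hat{f}) = z^{\sum_{i=0}^n \beta_i - \frac{n(n+1)}{2}}\, Q(z)\cdot e_0\wedge\cdots\wedge e_n,$$
for some polynomial $Q(z)$ with $Q(0)\neq 0$. The degree of $Q$ can be read off from the behavior at $z=\infty$: since the top-degree monomial of $z^{\beta_i}\varphi_i(z)$ is a nonzero constant times $z^{\beta_i+\deg\varphi_i}$, the leading Wronskian term at infinity is a Vandermonde-type expression in the exponents $\beta_0+\deg\varphi_0,\ldots,\beta_n+\deg\varphi_n$, times $z^{\sum(\beta_i+\deg\varphi_i)-\frac{n(n+1)}{2}}$. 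By Lemma~\ref{lem:distinct}, these $n+1$ quasi-degrees are mutually distinct, so the Vandermonde factor is nonzero and therefore
$$\deg Q = \sum_{i=0}^n \deg\varphi_i.$$

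Second, Equation~\eqref{equ:Lambda_n_2} from the proof of Proposition~\ref{prop:existence} (which uses only the data of the regular singularities at $0$ and $z_1,\dots,z_m$, and hence applies to any curve satisfying the conditions of Example~\ref{exam:RS1}) already identifies
$$\Lambda_n(\hat{f}) = \mathrm{Const}\cdot z^{\sum_{i=0}^n\beta_i-\frac{n(n+1)}{2}}\prod_{\ell=1}^m(z-z_\ell)^{n\gamma_{\ell,1}+(n-1)\gamma_{\ell,2}+\cdots+\gamma_{\ell,n}}\cdot e_0\wedge\cdots\wedge e_n,$$
using $\beta_0=0$. Hence $Q(z)$ is a nonzero constant multiple of $\prod_{\ell=1}^m(z-z_\ell)^{n\gamma_{\ell,1}+\cdots+\gamma_{\ell,n}}$, whose degree is exactly $\sum_{\ell=1}^m\big(n\gamma_{\ell,1}+(n-1)\gamma_{\ell,2}+\cdots+\gamma_{\ell,n}\big)$. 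Equating the two values of $\deg Q$ yields the lemma.

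The main technical delicacy is justifying the equality $\deg Q = \sum_{i=0}^n\deg\varphi_i$ in the first step, since a priori the leading terms of the Wronskian at infinity could cancel and force $\deg Q$ to be strictly smaller. Precisely this failure mode is ruled out by the mutual distinctness of the quasi-degrees $\beta_i+\deg\varphi_i$ provided by Lemma~\ref{lem:distinct}, which makes the relevant Vandermonde determinant nonzero. Everything else is a straightforward degree count.
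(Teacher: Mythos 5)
Your proof is correct and takes essentially the same route as the paper's: both expand $\Lambda_n(\hat{f})$ as a Wronskian into quasi-monomial summands, identify the top quasi-degree term $W_{n+1}\bigl(z^{\beta_0+\deg\varphi_0},\ldots,z^{\beta_n+\deg\varphi_n}\bigr)$ as nonvanishing (a Vandermonde-type factor, nonzero because the quasi-degrees are made mutually distinct via Lemma~\ref{lem:distinct}), and then compare degrees with Equation~\eqref{equ:Lambda_n_2}. If anything, you are slightly more explicit than the paper in ruling out cancellation of the leading term, a point the paper leaves implicit in its normalization preceding the lemma.
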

\begin{proof}
Denote by \(A\) the sum 
$\sum_{i=1}^m \left(n\gamma_{i,1} + (n-1)\gamma_{i,2} + \ldots + \gamma_{i,n}\right)$.
Consider \(\Lambda_n(\hat{f}(z))\) as the Wronskian 
$
W_{n+1}\left(z^{\beta_0} \varphi_0(z), \ldots, z^{\beta_n} \varphi_n(z)\right)
$
of the terms \(z^{\beta_0} \varphi_0(z), \ldots, z^{\beta_n} \varphi_n(z)\). This can be expanded by columns into a linear combination of finitely many quasi-monomials of the form 
$
W_{n+1}\left(z^{\beta_0+k_0}, z^{\beta_1+k_1}, \dots, z^{\beta_n+k_n}\right)
$.
Among these, the quasi-monomial summand 
$$W_{n+1}\left(z^{\beta_0 + \deg \varphi_0}, z^{\beta_1 + \deg \varphi_1}, \dots, z^{\beta_n + \deg \varphi_n}\right)$$
in \(W_{n+1}(z^{\beta_0} \varphi_0(z), \ldots, z^{\beta_n} \varphi_n(z))\) has the highest quasi-degree, calculated as 
$\sum_{i=0}^n (\beta_i + \deg \varphi_i) - \frac{n(n+1)}{2}$.
The statement follows from equation \eqref{equ:Lambda_n_2}.
\end{proof}

By this lemma, the degree vector
$\vv{d}= (\deg\varphi_0, \deg\varphi_1, \dots, \deg\varphi_n)$ could achieve a finite number of possible values, not exceeding 
$\begin{pmatrix} 
A+n+1 \\ n\end{pmatrix}$. Therefore, we complete the counting process. 

In summary, we conclude the detailed discussion of Example \ref{exam:RS1}.

\section{Three open questions} 

We conclude this manuscript by posing the following three open questions.

\begin{ques}
{\rm Utilizing the notations from Subsection 1.3, we inquire about the characterization of the coefficient matrices 
$\Gamma:=\big(\gamma_{i,j}\big)_{\substack{1 \leq i \leq k \\ 1 \leq j \leq n}}$
associated with the regular singularities \(\vv{D}\) represented by toric solutions to the ${\rm SU}(n+1)$ Toda system on the Riemann sphere. Remarkably, Lin-Wei-Ye \cite{LWY:2012} addressed the case when \(k=2\), and Alexandre Eremenko \cite{Ere2020} resolved the case when \(n=1\) for this inquiry.}
\end{ques}

\begin{ques}
{\rm A question akin to the one previously discussed emerges for compact Riemann surfaces of positive genus. In particular, for a specified genus \(g\), we aim to characterize the \(k \times n\) matrices \(\Gamma\) that correspond to regular singularities \(\vv{D}\) manifested by toric solutions on compact Riemann surfaces of genus \(g\). It is noteworthy that Gendron-Tahar \cite{GT23} addressed this issue for the scenario when \(n=1\).}
\end{ques}

\begin{ques}
{\rm 
Consider a \(k \times n\) matrix \(\Gamma\) and a non-negative integer \(g\). The task is to determine the dimension of the moduli space of toric solutions on compact Riemann surfaces of genus \(g\), where the regular singularities \(\vv{D}\) are characterized by the coefficient matrix \(\Gamma\). Notably, Sicheng Lu and the last author explored this scenario for the case \(n=1\) in their study \cite{LX2023}.}
\end{ques}

\noindent\textbf{Acknowledgements:}
B.X. expresses sincere gratitude to Professor Guofang Wang at the University of Freiburg for introducing him to the field of Toda systems during the summer of 2018 and providing invaluable suggestions in the spring of 2024.  
Our heartfelt appreciation also goes to Professor Zhaohu Nie at the University of Utah, who kindly addressed several questions from B.X. related to Toda systems.

\bibliographystyle{plain}
\bibliography{RefBase}

\end{document}